\newcommand{\R}{\mathbb{R}}
\newcommand{\N}{\mathbb{N}}
\newcommand{\Z}{\mathbb{Z}}
\newcommand{\Q}{\mathbb{Q}}
\newcommand{\C}{\mathbb{C}}
\renewcommand{\H}{\mathbb{H}}
\newcommand{\D}{\mathbb{D}}
\newcommand{\bS}{\mathbb{S}}
\newcommand{\Sol}{\mathcal{H}^2}
\newcommand{\M}{\mathcal{M}}
\newcommand{\SolU}{\mathcal{S}^1}
\newcommand{\X}{\mathcal{X}}
\newcommand{\Y}{\mathcal{Y}}
\newcommand{\K}{\mathcal{K}}
\newcommand{\T}{\mathcal{T}}
\renewcommand{\circ}{\raisebox{-1ex}{$\mathaccent23{\phantom{x}}$}}
\newcommand{\bs}{\backslash}
\newcommand{\PSL}{{\rm PSL}(2,\R)}
\newcommand{\PSLZ}{{\rm PSL}(2,\Z)}
\newcommand{\SLZ}{{\rm SL}(2,\Z)}
\renewcommand{\mod}{~\mathrm{mod}~}
\newcommand{\matriz}[4]{\left(
\begin{array}{cc}
 #1 & #2 \\
 #3 & #4
\end{array}
\right)}
\newtheorem{theorem}{Theorem}
\newtheorem{lemma}{Lemma}
\newtheorem{proposition}{Proposition}
\newtheorem*{conjecture*}{Conjecture}
\theoremstyle{definition}
\newtheorem{definition}{Definition}
\newtheorem{example}{Example}
\newtheorem{remark}{Remark}
\newtheoremstyle{named}{}{}{\itshape}{}{\bfseries}{.}{.5em}{\thmnote{#3} #1}
\theoremstyle{named}
\newtheorem{namedtheorem}{Theorem}
\title[Horocyclic trajectories in hyperbolic solenoidal surfaces]{Horocyclic trajectories in hyperbolic solenoidal surfaces of finite type}  
\author[F. Alcalde]{Fernando Alcalde Cuesta} 
\address{Universidad de Santiago de Compostela, 15782 Santiago de Compostela, Spain.}
\email{fernando.alcaldecuesta@gmail.com}
\author[A. Carballido]{\'Alvaro Carballido Costas}
\address{Galicia Supercomputing Center (CESGA), Avenida de Vigo s/n, 15705 Santiago de Compostela, Spain.}
\email{alvarocarballidocostas@gmail.com}
\author[M. Mart\'{\i}nez]{Matilde Mart\'{\i}nez}
\address{Instituto de Matem\'atica y Estad\'{\i}stica Rafael Laguardia, 
         Facultad de Ingenier\'{\i}a, Universidad de la Rep\'ublica, 
         J.Herrera y Reissig 565, C.P.11300 Montevideo, Uruguay.}
\email{matildem@fing.edu.uy}
\author[A. Verjovsky]{Alberto Verjovsky}
\address{Instituto de Matem\'aticas, Universidad Nacional Aut\'onoma de M\'exico, Apartado
Postal 273, Admon. de correos \#3, C.P. 62251 Cuernavaca, Morelos, M\'exico.}
\email{alberto@matcuer.unam.mx}
\thanks{The visits of M.M. and A.V. to research centres in Santiago de Compostela and Valladolid have been partially funded by State Research Agency PID2019-105621GB-100 and Xunta de Galicia 2023 GRC GI-2136 from Spain. M.M. and A.V. would also like to acknowledge Grupo CSIC 883174 (UdelaR, Uruguay) and Proyecto PAPIIT IN103324 (DGAPA, UNAM, México) for their financial support.}
\begin{document}

\begin{abstract}
We study the dynamical properties of the laminated horocycle flow on the unit tangent bundles of 2-dimensional smooth solenoidal manifolds of finite type. These laminations are the analog of complete hyperbolic surfaces of finite area.
\end{abstract}

\maketitle

\section{Introduction}
 
Geodesic and horocyclic trajectories on surfaces have been studied in depth since the end of the 19th century, especially in relation to the problem of the existence of periodic trajectories, with foundational contributions from J. Hadamard and H. Poincaré. The existence of non-periodic recurrent geodesics and transitive geodesics on surfaces of negative constant curvature dates back to the work of H. M. Morse and G. D. Birkhoff in the 1920s. It is the germ of the development of the ergodic theory in the 1930s. 

In 1936, in a seminal paper about the horocycle flow \cite{Hedlund}, G. A. Hedlund proved that all horocyclic trajectories in the unit tangent bundle of a hyper\-bolic surface of finite area are either periodic or dense. In particular, all horocyclic trajectories are dense if the surface is compact. Almost four decades passed before H. Furstenberg \cite{Furstenberg2} proved a stronger result in the compact case, namely unique ergodicity of the horocycle flow. 

For a hyperbolic surface of finite area, every periodic trajectory supports an invariant probability measure, and S. G. Dani \cite{Dani1978} for the modular surface and later S. G. Dani and J. Smillie \cite{DaniSmillie} for any hyperbolic surface of finite area proved that the Liouville measure on the unit tangent bundle and these measures are the unique measures which are invariant and ergodic for the horocycle flow (up to scaling). 

The distribution of the closed trajectories with large periods was studied by P. Sarnak in \cite{Sarnak1981} showing that the (normalised) Liouville measure is the weak* limit of the measures $(g_t)_*\nu$ which are obtained from a probability measure $\nu$ supported by a closed horocyclic trajectory pushing it forward by the geodesic flow $g_t$ for negative times. 

Over the last forty years, the topological and measurable dynamics of the horocycle flow have become well understood for geometrically finite surfaces \cite{Burger, Eberlein, 
Ratner1990, Ratner1992, Ratner1994, 
Roblin} and some classes of geometrically infinite surfaces \cite{Babillot2004,BabillotLedrappier,LedrappierSarig, MatsumotoMinimal,Sarig}. See also \cite{Dal'Bobook,Kaimanovich2000,Starkov1995} for a general overwiew. While horocyclic orbits are still periodic or dense in restriction to the non-wandering set when the surface is geometrically finite \cite{Eberlein}, the situation is very different in the geometrically infinite case: there are geometrically infinite surfaces with horocyclic orbits that are neither periodic nor dense  \cite{Dal'BoStarkov2000,Pommerenke1981} and other with horocyclic orbits that are neither closed nor dense \cite{Dal'BoStarkov2000,Nicholls,Sullivan1981} in the non-wandering set. In fact, there are many unresolved problems in connection with the nature of orbit closures and the existence of minimal sets. There have been some recent advances \cite{Dal'Bo2025} that should be added to several partial answers \cite{Bellis2018,Farre2024,GayeLo,Kulikov,MatsumotoMinimal}. 

In this paper we will study the topological dynamics of the horocycle flow on hyperbolic solenoidal surfaces of finite type, that is, transversely modelled by the Cantor set and based on surfaces of negative constant curvature with a finite number of cusps. 

The paradigm of this type of hyperbolic solenoidal surfaces is the 
solenoid $\X$ obtained as the inverse limit of all finite regular coverings of the Riemann sphere minus three points $S=\C P^1-\{0,1,\infty\}$. Since $S$ is the branched cover
of order 6 of the modular orbifold $M =\PSLZ \bs \H$
(corresponding to the principal congruence subgroup $\Gamma(2)$), there is a holomorphic map (in the sense of holomorphic laminations) $\pi: \X \to M$, cf. Example~\ref{ex:arithmetic}. Therefore, this marvellous solenoidal hyperbolic surface
codifies many aspects of mathematics, for instance: the absolute Galois group $\text{Gal}(\overline{\Q}/\Q)$ of $\Q$, the theory of Belyi arithmetic surfaces
defined over $\overline{\Q}$ and therefore Grothendieck's project 
\emph{Esquisse d'un programme} \cite{Grothendieck1997}, as was observed by P. Deligne in his celebrated paper 
\emph{Le groupe fondamental de la droite projective moins trois points}\cite{Deligne}.
The geodesic flow $g_t$ on the unit tangent bundle $\Y$ of $\X$ acts on each minimal set of the horocycle flow. This family is projected onto a one-parameter family on closed horocyclic trajectories over horocycles on the modular surface whose periods become increasingly large or small depending on whether the time $t$ goes to $-\infty$ or $+\infty$. The rate of approach of their lengths to the area form is related to the Riemann hypothesis, as observed by D. Zagier \cite{Zagier1981} (see also \cite{Sarnak1981}).

This motivating example is the heart of this work, although we will focus on the topological dynamics of the horocycle flow on hyperbolic solenoidal surfaces and leave the measurable aspects for future work. What are the orbits of the horocycle flow on a hyperbolic solenoidal surface of finite type? What is the solenoidal structure over each cusp? 
Where do the minimal compact sets converge when we make the periods of their projections larger or smaller by applying the geodesic flow?
These are the three questions we intend to answer. 

Solenoidal surfaces of finite type are a broad class, where the horocycle flow exhibits too wide a range of dynamical behaviours. To illustrate this, using the fact that the group of homeomorphisms of the Cantor set is perfect \cite{Anderson}, we will give a family of examples of  hyperbolic solenoidal surfaces with a single cusp proving that any dynamics on the Cantor set is realizable over the cusp (see Example~\ref{example:dynamics_over_cusp_can_be_anything}). In particular, there exist horocyclic orbits which are neither periodic nor dense, and in fact any 1-dimensional solenoid can appear as a horocycle orbit closure. Thus some additional property is needed to have a clear-cut description of closed sets invariant under the horocycle flow.

Here we will follow M. C. McCord \cite{McCord} in emphasising solenoidal surfaces obtained as an inverse limit of surface coverings. If all of these coverings are regular, these inverse limits are \emph{topologically homogeneous}, that is, for any two points, there is a global homeomorphism that sends one point to the other one. According to \cite{ClarkHurder}, this property characterises (up to homeomorphism) the inverse limits of regular coverings, named \emph{McCord solenoids}, in the compact case. Even if this homogeneous case is interesting, most of our results apply to the more general setting of inverse limits of not necessarily regular surface coverings. Therefore, they cannot be obtained through variations of Ratner's theorems. As proved in \cite{Alvarez-Brum-Martinez-Potrie}, there are transversely Cantor compact laminations by hyperbolic surfaces with all possibles topologies showing a behaviour which is far away from the homogeneous context. Based on the celebrated results of A. Eskin, M. Mirzakhani, and A. Mohammadi, the dynamics of horocycle flows in moduli spaces of translation surfaces have often been presented as parallel to that of unipotent flows in homogeneous spaces. However, the \lq emerging picture' of the horocycle flow in moduli spaces drawn by J. Chaika and B. Weis in \cite{ChaikaWeiss} is more complicated than this parallelism might suggest. In fact, translation surfaces are a valuable source of examples of solenoidal manifolds with remarkable properties, which we will study in a future contribution.

The first two sections will be devoted to introducing the notion of \emph{n-dimensional solenoidal manifold} \cite{Sullivan1993,Sullivan2014, VerjovskySolenoids,VerjovskySullivan} and describing examples to illustrate properties that characterise some remarkable families. All solenoidal manifolds considered here will be smooth, that is, of class $C^\infty$ (although $C^3$ is actually sufficient for our purposes).

The aim of Section~\ref{SHedlund} will be precisely to prove a Hedlund theorem for any hyperbolic solenoidal surface of finite type $\X$ that is the inverse limit of a tower of coverings of surfaces of finite area. (This includes our motivating example.)

\begin{theorem}\label{thm:1}
Let $\X$ be the inverse limit of a tower of finite coverings based on a hyperbolic surface $S$ of finite area. Then any orbit of the horocycle flow on the unit tangent bundle $\Y = T^1 \X$ of $\X$ is either dense or projects onto a periodic orbit of the horocycle flow on the unit tangent bundle $T^1S$ of $S$.
\end{theorem}

In Section~\ref{Sminimalsets}, we will analyse the nature of closed subsets $\M$ of $\Y = T^1 \X$ which are invariant by the horocycle flow and  project onto closed horocyclic orbits in $T^1 S$ associated to the cups of $S$. Our example above is precisely of type (3):

\begin{theorem}\label{thm:2}
Let $\X$ be the inverse limit of a tower of finite coverings of a hyperbolic surface $S$ of finite area. Let $\M$ be a compact subset of $\Y = T^1 \X$ which is invariant by the horocycle flow. Then there are three possibilities: 
\smallskip 

\noindent
(1) $\M$ is a 1-dimensional solenoidal manifold that is a minimal set of the horocycle flow on $\Y$,
\smallskip 

\noindent
(2) $\M$ is a disjoint union of finitely many 1-dimensional solenoidal manifolds that are minimal sets of the horocycle flow on $\Y$,
\smallskip  

\noindent
(3) $\M$ is a disjoint union of uncountably many 1-dimensional solenoidal manifolds (possibly reduced to closed horocyclic orbits) that are minimal sets of the horocycle flow on $\Y$.
\end{theorem}

\noindent
The remainder of Section~\ref{Sminimalsets} will be devoted to providing examples of McCord solenoidal surfaces that satisfy each of these conditions. Note that the regu\-larity of the covering maps involved in the inverse limits is not necessary to prove this result, which will remain valid for any inverse limit of coverings.

In Section~\ref{Slargeperiods}, we will study the behaviour of minimal sets $\M$ of the horocycle flow in $\Y = T^1 \X$ that project over closed orbits in $T^1 S$ associated to horocycles with small and large period. Under the hypotheses of previous theorems, from the tower of coverings defining the inverse limit, we directly construct a compactification $\widehat{\Y}$ of $\Y$. It will be homeomorphic to the end compactification of $\Y$, and we will call \emph{cuspidal ends}  the boundary points in $\widehat{\Y} - \Y$. 

Our last theorem is proved under the additional assumption that the solenoidal manifold $\Y$ --equivalently, the solenoidal surface $\X$-- is McCord. 
In this setting, we will see that if we take horocycles with smaller and smaller period for positive times, the minimal sets $g_t(\M)$ converge (in the Hausdorff distance) to a cuspidal end of $\Y$. Taking horocycles of increasingly larger period, we will obtain the following topological version of Sarnak's theorem for solenoidal surfaces: 

\begin{theorem}\label{thm:3}
Let $\X$ be a McCord hyperbolic solenoidal surface of finite type. Let $\widehat{\Y}$ be the end compactification of its unit tangent bundle $\Y=T^1\X$. If $\M$ is a minimal set of the horocycle flow on $\Y$, then
\smallskip 

\noindent
(i) for positive times, the minimal sets $g_t(\M)$ converge (in the Hausdorff distance) to a cuspidal end of $\Y$,
\smallskip 

\noindent
(ii) for negative times, the minimal sets $g_t(\M)$ converge (in the Hausdorff distance) to the whole of $\widehat{\Y}$.
\end{theorem}  

The authors are grateful for the insightful comments and suggestions of the anonymous referee.

\setcounter{theorem}{0}

\section{Solenoidal surfaces}

\subsection{Definition of solenoidal manifolds} 

\begin{definition}\label{def:solenoidal_manifold}
An $n$-dimensional \emph{solenoidal manifold} is a second countable locally compact Hausdorff space that is locally homeomorphic to a disk of dimension $n$ times a Cantor set. 
Namely, $\X$ is a solenoidal manifold if it has an open cover $\{U_i\}_{i\in I}$ such that:
\begin{itemize}
    \item [\textbullet] for every $i$, there is a homeomorphism $\varphi_i:  U_i \to \D^n\times T_i$, where $\D^n$ is the unit open disk in $\R^n$ and $T_i$ is compact, totally disconnected and perfect;
    \item [\textbullet] for $(x,t)\in \varphi_j(U_i\cap U_j)$, $\varphi_i \circ \varphi_j^{-1}(x,t)=(a_{ij}(x,t),b_{ij}(t))$;
    \item [\textbullet] for fixed $t$, the functions $a_{ij}(\cdot,t)$ are smooth.
Smooth means $C^\infty$, but class $C^3$ is actually sufficient to all effects in this paper.
\end{itemize}
\end{definition}

\noindent
The $(U_i, \varphi_i)$ are called \emph{foliated charts}.
The second condition implies that the connected components of $\X$, which we call \emph{leaves}, have a structure of $n$-dimensional manifolds. We say that the sets of the form $\varphi_i^{-1}(\D^n \times \{t\})$ are \emph{plaques}, and
$T'_i=\varphi_i^{-1}(\{0\}\times T_i)$ are \emph{local transversals}. The union $T=\bigcup_{i\in I}T'_i$ of local transversals is a \emph{complete transversal} that meets all the leaves, and each $T'_i$ is a clopen set of $T$. In fact, we can refine the foliated atlas of $\X$ in order to assume the equivalence relation defined by the plaques on $T$ is trivial, so $T$ is the Cantor set decomposed into clopen sets $T_i$.

\begin{definition} \label{def:minimal_solenoidal_manifold}
A solenoidal manifold $\X$ is \emph{minimal} if every leaf is dense in $\X$.
\end{definition}

\subsection{The universal solenoid}

A well-known example of a minimal solenoidal surface is the \emph{universal solenoid}, which we call $\SolU$. We 
give two alternative descriptions of $\SolU$, reflecting two points of view relevant to the sequel. 
\medskip 

Consider the circle $\bS^1=\{z\in \C\ :\ |z|=1\}$ and the family of its finite coverings
$$\pi_n : z \in \bS^1 \mapsto z^n \in \bS^1$$
 for 
%$z\in\bS^1$ and any $n \in\N^*$
$n \geq 2$.
It is a directed set with the following order relation: 
$$\pi_n\leq \pi_m$$
if and only if there exists a covering $\pi:\bS^1\to \bS^1$ such that $\pi_m=\pi_n \circ\pi$. Namely, 
$$
\pi_n\leq \pi_m\ \Longleftrightarrow \ n \mbox{ divides } m,$$
and in this case  $\pi=\pi_{\frac{m}{n}}$.

The \emph{inverse limit} of this family is 
$$\SolU=\left\{z=(z_1,z_2,\ldots)\in (\bS^1)^{\N}\ :\ \left(n \mbox{ divides } m \Rightarrow \pi_{\frac{m}{n}}(z_m)=z_n \right)\right\}.$$
It is a closed subset of $(\bS^1)^{\N}$, therefore compact. 
There is a projection $\pi:\SolU\to \bS^1$ given by $\pi(z)=z_1$. The preimage $\pi^{-1}(1)$ is a Cantor set, and it is a local (complete) transversal of $\SolU$ in the sense of Definition \ref{def:solenoidal_manifold}. Locally, $\SolU$ is a Cantor set times a segment. All leaves are dense, so $\SolU$ is a one-dimensional minimal solenoidal manifold. 

As explained in \cite{BurgosVerjovsky}, the universal solenoid $\SolU$ also is the Pontryagin dual of the additive group $\Q$ with the discrete topology, that is, the group of homomorphisms $\hat \Q = Hom(\Q,\bS^1)$ with the compact-open topology. 
\medskip 

It has an alternative description. Let $\hat{\Z}$ be the profinite completion of $\Z$. It is a Cantor group and $\Z$ is naturally embedded into $\hat \Z$ as a dense subgroup. Then 
$$\SolU= \Z \bs (\R\times \hat\Z),$$
where the action on $\Z$ on $\R\times \Z$ is diagonal: by left translations on $\R$ and  
$\hat\Z$.

Recall that $\hat\Z$ is the set of sequences
$$
(k_1 \mod 1, k_2 \mod 2, k_3 \mod 3,\ldots)
$$
such that whenever $n$ is a multiple of $m$, $k_m=k_n\mod m$.
The embedding of $\Z$ is simply given by
$$
k\in\Z \mapsto (k \mod 1, k \mod  2, k \mod  3,\ldots).
$$
Let $\tau:\hat\Z \to \hat\Z$ be the translation by 1, namely, 
$$
\tau:(k_1 \mod 1, k_2 \mod  2,\ldots)\mapsto (k_1+1 \mod 1, k_2+1 \mod  2,\ldots).
$$
Then the above paragraph says that $\SolU$ is the suspension of $\tau$.

\begin{remark}
    When defining $\SolU$ as an inverse limit, we could use any cofinal set of the directed set $\{\pi_n:\bS^1\to\bS^1\ :\ n\geq 2\}$, for example, 
$$\{\pi_{n!}:\bS^1\to\bS^1\ :\ n\geq 2\}.$$
This second choice has the advantage of being totally ordered. We say it is a \emph{tower of coverings}, and we write
    $$\cdots \bS^1 \overset{\pi_n}{\longrightarrow} \bS^1 \longrightarrow \cdots \longrightarrow \bS^1 \overset{\pi_2}{\longrightarrow} \bS^1.$$ 
For simplicity and conciseness, we will write
$$
\SolU= \varprojlim \, \big(\bS^1\overset{\pi_n}{\longrightarrow}\bS^1\big).
$$
Notice that the map $\pi_{n!}$ is obtained as the composition of the first $n-1$ arrows. It means that the directed set we are considering in order to take its inverse limit is the totally ordered set $\{\pi_n\circ \pi_{n-1}\circ\cdots\circ\pi_2:\bS^1\to\bS^1\ :\ n\geq 2\}$.
\end{remark}

\subsection{The universal hyperbolic solenoid}

A $2$-dimensional solenoidal manifold will be called a \emph{solenoidal surface}.
A well-known example of a minimal solenoidal surface is the \emph{universal hyperbolic solenoid}, which we call $\Sol$. Just as we did with $\SolU$, we will give two alternative descriptions.
\medskip 

Let $S_0$ be any compact hyperbolic surface, and $x_0\in S_0$. We consider the family $A$ of all 
pointed hyperbolic surfaces 
$(S_\alpha,x_\alpha)$ which are regular coverings of $S_0$ such that the covering sends $x_\alpha$ to $x_0$. Namely, $S_\alpha$ is a compact hyperbolic surface and there is a covering map $\pi_\alpha:S_\alpha\to S_0$ such that $\pi_\alpha(x_\alpha)=x_0$ and the group of deck transformations acts transitively on the preimages of any point in $S_0$.

A partial order can be defined in $A$ by stating that $\alpha\leq \beta$ if there exists a finite regular covering
$$\pi_{\alpha\beta}:S_\beta \to S_\alpha$$
such that $\pi_{\alpha\beta}(x_\beta)=x_\alpha$.

The inverse limit of $(A,\leq)$ is 
$$\Sol=\left\{y=(y_\alpha)\in \prod_{\alpha\in A} S_\alpha\ :\ \pi_{\alpha\beta} (y_\beta)=y_\alpha \mbox{ whenever } \alpha\leq \beta\right\},$$
seen as a topological subspace of $\prod_{\alpha\in A} S_\alpha$. It is a minimal compact solenoidal surface with simply connected leaves \cite{Sullivan1993}, which does not depend on the choice of $S_0$. 

There is a projection $\pi:\Sol\to S_0$ defined by $\pi(y)=y_0$. The preimage $\pi^{-1}(x_0)$ is a Cantor set that can be identified as the set of all the 
 base points $x_\alpha$, $\alpha\in A$. It is a 
local (complete) transversal in the sense of 
Definition \ref{def:solenoidal_manifold}, and $\Sol$ is a minimal solenoidal surface.
\medskip

An alternative description of $\Sol$ is the following. As before, $S_0$ is a compact hyperbolic surface, and $x_0\in S_0$. Let $G_0=\pi_1(S_0,x_0)$ be the fundamental group of $S_0$, and $G$ the profinite completion of $G_0$. Therefore, $G$ is a topological Cantor group, and $G_0$ injects naturally into $G$ as a dense subgroup. Then, if $\H$ is the hyperbolic plane,
$$\Sol=G_0 \bs (\H\times G),$$
where the group $G_0$ acts on the product $\H\times G$ diagonally, by deck transformations on $\H$ and left translation on $G$. The action is properly discontinuous, and the quotient is the universal hyperbolic solenoid. The leaves are the images of the horizontal foliation in $\H\times G$. Each vertical copy of $G$, namely, the set $\{z\}\times G$ with $z\in\H$, injects into the quotient, becoming a (complete) local transversal. Since the action of $G_0$ preserves the group structure on $G$, these transversals are not only Cantor sets, but they carry a group structure preserved under the change of foliated charts. 
\medskip

In the universal hyperbolic solenoid $\Sol$, leaves carry a hyperbolic structure. 
If we see $\Sol$ as the inverse limit of coverings, the restriction of the projection $\pi$ to any leaf $L$ gives a universal covering of $S_0$. The hyperbolic metric on $S_0$ can be pulled back to $L$ via $\pi$. This endows each leaf with a hyperbolic metric, and these metrics vary continuously, in the smooth topology, as we change leaves in $\Sol$.

If we see $\Sol$ as a quotient of $\H\times G$, then $G_0$ can be thought of as a Fuchsian group, so that $S_0=G_0\bs\H$ and the action of $G_0$ on $\H$ by deck transformations is an action by isometries of the hyperbolic metric. Therefore, the metric on horizontal leaves $\H\times \{g\}$ in $\H\times G$ induces a metric on leaves in $\Sol$.
\medskip

When $\Sol$ is thus endowed with a metric, all kinds of objects from Riemannian geometry can be defined and studied. This is true more generally, so we will give some general definitions.

\subsection{Hyperbolic solenoidal surfaces}

If $\X$ is a $n$-dimensional solenoidal manifold, the \emph{tangent space} $T\X$ of $\X$ is a $2n$-dimensional solenoidal manifold whose leaves are the tangent bundles of the leaves of $\X$. It is defined by extending the foliated charts of $\X$, in the same way as we define the tangent bundle of a manifold. 

Similarly, all objects in differential geometry can be defined, using the differentiable structure of the leaves. In the transverse Cantor direction, these objects are \emph{continuous}. Of special importance to us will be hyperbolic metrics. 

\begin{definition}\label{def:metric}
If $\X$ is a solenoidal manifold, a \emph{Riemannian metric} on $\X$ is a positive definite metric tensor on $T\X$ that varies smoothly along the leaves of $\X$ and continuously in $\X$ in the smooth topology.
The metric is hyperbolic if all sectional curvatures are equal to -1.
\end{definition}

All solenoidal manifolds admit Riemannian metrics, since they can be defined in local charts and glued using partitions of unity.

Under a mild topological condition, compact solenoidal surfaces admit infinitely many hyperbolic metrics, one in each conformal class. A solenoidal surface endowed with such a metric will be called a \emph{hyperbolic solenoidal surface}.

\begin{definition}\label{def:geodesic_and_horocycle_flows}
Let $\X$ be a hyperbolic solenoidal surface and $\Y=T^1\X$ be its unit tangent bundle. The \emph{geodesic flow} $g_t$ is the flow on $\Y$ for which all leaves are invariant, and that restricts to the geodesic flow on each leaf. The (stable) horocycle flow $h_t$ is the flow on $\Y$ for which all leaves are invariant, and that restricts to the stable horocycle flow on each leaf.
\end{definition}

For example, the \emph{unit tangent bundle} $T^1\Sol$ is the three-dimensional solenoidal manifold
$$T^1\Sol = G_0\bs(T^1\H\times G),$$
whose leaves are the unit tangent bundles of the leaves of $\Sol$. The foliated geodesic and horocycle flows can be defined on $T^1\Sol$. When restricted to each leaf, they are the usual geodesic and horocycle flows, and they project via $\pi$ to the corresponding flows in $S_0$.

Each leaf of $\Sol$ is simply connected, namely, it is a hyperbolic plane. Therefore, the geodesic and horocycle flows, when restricted to a single leaf, are well understood. Their complexity comes from the fact that leaves are not embedded in $\Sol$.
%; that is, as topological subspaces of $\Sol$ leaves are not hyperbolic planes. 
As every leaf accumulates everywhere, this forces a lot of recurrence on the flows $g_t$ and $h_t$.

In fact, as in the case of a compact hyperbolic surface, the horocycle flow in $\Sol$ is minimal; see \cite{Martinez-Matsumoto-Verjovsky}.

\subsection{Hyperbolic solenoidal surfaces of finite type}

We will concentrate on solenoidal surfaces that are, in some way, similar to non-compact hyperbolic surfaces of finite area.

\begin{definition}\label{def:solenoidal_surface_of_finite_type}
Let $\X$ be a hyperbolic solenoidal surface. We say that it is \emph{of finite type} if there exist a non-compact hyperbolic surface $S_0$ of finite area and a map $\pi:\X\to S_0$ such that
\begin{itemize}
    \item [\textbullet] $\pi$ is a locally trivial fibration whose fibre is a Cantor set and
    \item [\textbullet] each fibre $\pi^{-1}(x)$ is a local transversal of $\X$.
    \item the restriction of $\pi$ to any leaf of $\X$ is a local isometry.
\end{itemize}
\end{definition}

Remark that, since we demand that $S_0$ be non-compact, the universal solenoid $\Sol$ is not included in this definition. In the next sections we will describe several examples of hyperbolic solenoidal surfaces of finite type, with different kinds of transverse structures.

\section{Examples of hyperbolic solenoidal surfaces of finite type}

\subsection{Solenoidal cusps}

We can easily endow the punctured open disk 
$$\D^*=\{z\in\C\ :\ 0<|z|<1\}$$
with a hyperbolic metric in such a way that it becomes a hyperbolic cylinder with a \emph{cusp}. In fact, if $\H$ be the upper half-plane model of the hyperbolic plane, then the transformation group of the holomorphic covering $h:\H\to\D^*$ given by $h(z)=e^{iz}$ preserves the hyperbolic metric on $\H$.
With the induced metric, $\D^*$ is a complete hyperbolic surface for which the radial rays are geodesics and the circles $|z|=k$ (for $k>0$) are horocycles whose length tends to 0 as $k\to 0$. All regular coverings $\pi_n:\D^\ast \to \D^\ast$ given by $\pi_n(z)=z^n$ preserve this hyperbolic metric.

\begin{definition}\label{def:solenoidal_cusp} 
The inverse limit of the regular coverings $\pi_n:\D^*\to \D^*$ is a hyperbolic solenoidal surface $\mathcal{C}$, which we call \emph{solenoidal cusp}.
\end{definition}

Identifying the punctured closed disk $\bar{\D}^*$ with $\{ z\in\C\ :\ 0<|z|\leq\frac{1}{e} \} \subset \D^*$, the inverse limit $\bar{\mathcal{C}}$ of the regular coverings $\pi_n:\bar{\D}^*\to\bar{\D}^*$ can be thought of as a \emph{hyperbolic solenoidal surface with boundary}, the boundary being $\SolU$.

\begin{remark}
The transverse structure to $\SolU$ and $\mathcal{C}$ is the same, represented by the Cantor group $\hat\Z$. Both solenoidal manifolds have a completely invariant measure $\mu$, coming from the Haar measure $\nu$ on $\hat\Z$. Indeed, $\mu$ is obtained by integrating the transverse invariant measure $\nu$ against the volume form on the leaves. The mass of $\bar{\mathcal{C}}$ can be easily computed, and it is finite. This strengthens the idea that $\bar{\mathcal{C}}$ is really a solenoidal analogue of a cusp in a hyperbolic surface. 
\end{remark}

\subsection{Examples}

We will construct different hyperbolic solenoidal surfaces of finite type.
\medskip

\begin{example}[Suspension of a Schottky group]\label{example:Schottky}
    Let $\Gamma$ be a Fuchsian group such that $S=S_{0,3}=\Gamma\backslash\H$ is a surface of genus 0 with 3 punctures. It is a free group in two generators. Let $\rho:\Gamma\to \PSL$ be any faithful representation for which $\rho(\Gamma)$ is a Schottky group with a Cantor limit set $\Lambda$. Then $\rho(\Gamma)$ 
acts projectively on the circle, seen as the boundary at infinity of the hyperbolic plane, 
%by homographies 
leaving $\Lambda$ invariant. 
Consider the suspension
    $$\X=\Gamma\bs(\H\times\Lambda),$$
    where $\Gamma$ acts diagonally, by isometries on the hyperbolic plane and as $\rho(\Gamma)$ on $\Lambda$.
    Then $\X$ is a solenoidal surface, and there is a fibration $\pi:\X\to S$ whose fibre is the Cantor set $\Lambda$.
     Namely, $\X$ is a hyperbolic solenoidal surface of finite type.

    If $H$ is a closed horocycle in $S$, its preimage $\pi^{-1}(H)$ is a one-dimensional solenoidal manifold whose leaves are horocycles in $\X$. Let $\gamma$ be the element of the fundamental group $\Gamma$ of $S$ represented by $H$, and $\tilde H$ the $\gamma$-invariant horocycle in the hyperbolic plane that projects onto $H$. Then $\rho(\gamma)$ has an attracting fixed point $\gamma^+$ and a repelling fixed point $\gamma^-$ in $\Lambda$. The horocycles $\tilde H\times \{\gamma^{\pm}\}$ in $\H\times\Lambda$ project onto two closed horocycles in $\X$ each of which projects one-to-one onto $H$. All other horocycles in $\pi^{-1}(H)$ accumulate on both closed horocycles, since the transverse dynamics of $\pi^{-1}(H)$ is that of $\rho(\gamma)$ acting on $\Lambda$.
\end{example}

\begin{example}[Any dynamics can appear over the cusp]\label{example:dynamics_over_cusp_can_be_anything}
 Let $K$ be the Cantor set. Its group of homeomorphisms $Homeo(K)$ is \emph{perfect} (see \cite{Anderson}), which means that its commutators generate it. Consider any homeomorphism $\varphi:K\to K$, and let $f_1,\ldots,f_g, h_1,\ldots,h_g\in Homeo(K)$ be such that
    $$\prod_{i=1}^g [f_i,h_i]=\varphi.$$
    
Let $S=S_{g,1}$ be a hyperbolic surface of genus $g$ with one puncture. Its fundamental group is free in the generators $\alpha_1,\ldots,\alpha_g,\beta_1\,\ldots,\beta_g$, and the product of commutators $\prod_{i=1}^g[\alpha_i,\beta_i]$ represents the class of closed horocycles. Let $\rho:\pi_1(S)\to Homeo(K)$ be the representation defined by
\begin{eqnarray*}
\rho(\alpha_i)&=&f_i\\
\rho(\beta_i)&=&h_i
\end{eqnarray*}
for all $i$. Its \emph{suspension} is the hyperbolic solenoidal surface
$$\mathcal{X}=\pi_1(S)\bs(\H\times K),$$
where $\pi_1(S)$ acts diagonally, by deck transformations on $\H$ and via the representation $\rho$ on $K$. Therefore, there is a fibration $\pi:\mathcal{X}\to S$ with fibre $K$.

If $H$ is a closed horocycle in $X$ of length 1, the element it represents in $\pi_1(S)$ acts on $K$ like $\varphi$, and $\pi^{-1}(H)$ is the suspension of $\varphi$. 
This means that \emph{for any} homeomorphism $\varphi$ of $K$, there is a hyperbolic solenoidal surface of finite type for which the preimage under $\pi$ of a neighbourhood of the cusp is homeomorphic to a cylinder over the suspension of $\varphi$.
 \end{example}

\begin{example}[The cuspidal solenoid $\bar{\mathcal{C}}$ as cusp of a solenoidal surface of finite type] \label{example:suspension_with_universal_solenoid_over_closed_horocycle}
    
    A special case of the above example consists in taking $K=\hat\Z$ and $\tau:\Z\to\Z$, as before,  the homeomorphism given by 
    $$\tau:(k_1 \mod 1, k_2 \mod  2,\ldots)\mapsto (k_1+1 \mod 1, k_2+1 \mod  2,\ldots).$$

The construction of Example \ref{example:dynamics_over_cusp_can_be_anything} gives a hyperbolic solenoidal surface $\X$ and a fibration $\pi:\mathcal{X}\to S$ with fibre a Cantor set, where $S$ is a non-compact hyperbolic surface of finite area.
If $H$ is a closed horocycle in $X$ of length 1, its preimage $\pi^{-1}(H)$ is the universal solenoid $\SolU$. If $C$ is the connected component of $S-H$ that is homeomorphic to a cylinder, $\pi^{-1}(C)$ is homeomorphic to the solenoidal cusp ${\mathcal C}$.
\end{example}

\begin{remark}
 The solenoidal surface $\X$ in Example \ref{example:suspension_with_universal_solenoid_over_closed_horocycle} can be seen as the surface obtained by gluing a solenoidal cusp $\bar{\mathcal{C}}$ to a hyperbolic solenoidal surface with boundary $\SolU$. It is simply defined by suspension of the free group generated by $\alpha_1,\ldots,\alpha_g,\beta_1,\ldots,\beta_g$. As stated in  \cite{Sullivan2014,VerjovskySolenoids}, this shows that the universal solenoid $\SolU$ is cobordant to zero.
\end{remark}

\begin{example}[A solenoidal surface of finite type with a transverse $p$-adic Riemannian structure]
\label{example:suspension_with_triadic_solenoid_over_closed_horocycle}
This example comes from a construction that is similar to the previous one, but the representation is defined to preserve a transverse group structure and a transverse distance. Consider the Cantor group $\Z_3$ of the triadic integers, and 
recall that $\Z_3$ is the set of sequences
$$(k_1 \mod 3, k_2 \mod  3^2, k_3 \mod  3^3,\ldots)$$
with the $k_i\in\Z$, such that $k_i=k_j\mod 3^i$ whenever $i<j$.
The embedding of $\Z$ is simply given by
$$k\in\Z \mapsto (k \mod 3, k \mod  3^2, k \mod  3^3,\ldots).$$
Let $\tau:\Z_3 \to \Z_3$ be the translation by 1, namely, 
$$\tau:(k_1 \mod 3, k_2 \mod  3^2,\ldots)\mapsto (k_1+1 \mod 3, k_2+1 \mod  3^2,\ldots).$$

The suspension of $\tau$ gives a solenoidal 1-manifold $\mathcal{S}_3$, which can also be obtained as the inverse limit
$$\varprojlim \, \big(\bS^1\overset{\pi_n}{\longrightarrow}\bS^1\big),$$
where $\pi_n(z)=z^3$ for all $n \in \N$.

Let $S=S_{1,1}$ be a once-punctured torus with a hyperbolic metric. Its fundamental group is free in two generators $\alpha$ and $\beta$. Consider a representation $\rho:\pi_1(S)\to Aut(\Z_3)$ given by $\rho(\alpha)=\tau$ and $\rho(\beta)=\psi$, where
\begin{equation*}
\psi:(k_1 \mod 3, k_2 \mod  3^2,\ldots)\mapsto (2k_1 \mod 3, 2k_2 \mod  3^2,\ldots).
\end{equation*}

Notice that both $\tau$ and $\psi$ are group automorphisms of $\Z_3$. For $\tau$ this is clear, and for $\psi$ it follows from the fact that 2 and 3 are relatively prime. 

A straightforward computation shows that $\psi\circ\tau=\tau^2\circ\psi$, and therefore
\begin{equation}\label{equation:commutation_of_psi_and_tau}
[\psi,\tau]=\tau.
\end{equation}

The suspension of $\rho$ is the hyperbolic solenoidal surface
$$\mathcal{X}=\pi_1(S)\bs(\H\times \Z_3),$$
where $\pi_1(S)$ acts diagonally, by deck transformations on $\H$ and via the representation $\rho$ on $\Z_3$. Therefore, there is a fibration $\pi:\mathcal{X} \to S$ with fibre $\Z_3$. Since $\rho(\pi_1(S))$ acts by group automorphisms of $\Z_3$, the transversal to $\mathcal{X}$ is a Cantor set having the group structure of $\Z_3$. Furthermore, Equation \eqref{equation:commutation_of_psi_and_tau} says in particular that the group $\rho(\pi_1(S))$ is solvable, and therefore amenable. This implies that it has an invariant measure on $\Z_3$. It contains a dense subgroup of the group of translations, so this invariant measure has to be the Haar measure. It also admits an invariant distance. 

As in previous examples, we can take a closed horocycle $H$ in $S$ and look at the preimage $\pi^{-1}(H)$. It is the suspension of $\rho([\alpha,\beta])=\tau$, namely, the triadic solenoid $\mathcal{S}_3$. Moreover, if $C$ is the connected component of $S-H$ that is homeomorphic to a cylinder, $\pi^{-1}(C)$ is homeomorphic to the inverse limit
$$\mathcal{C}_3=\varprojlim \, \big( \D^*\overset{\pi_n}{\longrightarrow}\D^*\big).$$

\end{example}

\begin{example}
 We can do a similar construction over a non-compact surface $S=S_{1,n}$ of genus one and $n$ cusps, for any $n$. 
 
 Let $c_1,\ldots,c_n$ be the punctures in $S$ and $\gamma_i$ be a curve that turns once around $c_i$ (and is freely homotopic to $c_i$) for every $i$. The fundamental group of $S$ has a canonical presentation as a free group in the generators $\alpha, \beta, \gamma_1,\ldots,\gamma_{n-1}$, where $\alpha$ and $\beta$ satisfy
$$[\alpha,\beta]=\gamma_1\gamma_1\cdots\gamma_n.$$

Let $p$ be a prime number that does not divide $n+1$. For the group of $p$-adic integers $\Z_p$, we define the automorphisms $\tau$ and $\psi$ by 

\begin{align*}
 \tau(k_1\mod p,k_2\mod\,p^2,\ldots ) & = (k_1+1\mod p,k_2+1\mod\,p^2,\ldots ) \\
 \psi(k_1\mod p,k_2\mod\,p^2,\ldots ) & = ((n+1)k_1\mod p,(n+1)k_2\mod\,p^2,\ldots ).
\end{align*}
A straightforward computation shows that 
$$[\psi,\tau]=\tau^n.$$
This allows us to define a representation $\rho:\pi_1(S)\to \Z_p$ by 
\begin{align*}
 \rho(\alpha) &=\tau\\
 \rho(\beta) & = \psi\\
 \rho(\gamma_i) &= \tau \mbox{ for } i=1,\ldots,n-1.
\end{align*}
It is easy to see that $\rho(\gamma_n)=\tau$.

The suspension of this representation gives a solenoidal manifold $\X$ with a fibration $\pi:\X\to S$. If $H$ is a closed horocycle in $S$, $\pi^{-1}(H)$ is homeomorphic to the suspension $\mathcal{S}_p$ of $\tau$, called the \emph{$p$-adic solenoid}. Namely, the preimage under $\pi$ of a cusp is homeomorphic to a cylinder over $\mathcal{S}_p$. Moreover, $\pi$ is a principal bundle, since $\rho(\pi_1(S))$ acts on $\Z_p$ by group automorphisms. 
\end{example}

\subsection{Inverse limits and McCord solenoids}

If $M$ is an $n$-dimensional manifold, a  projective system of finite coverings of $M$ is an infinite family of $n$-dimensional manifolds $M_\alpha$ and finite regular coverings $\pi_\alpha:M_\alpha\to M$ such that the $\alpha$ belong to a set $A$ equipped with the order $\alpha\leq \beta$ if and only if there exists a finite covering 
$$\pi_{\alpha\beta}:M_\beta\to M_\alpha$$
such that $\pi_\beta= \pi_\alpha \circ \pi_{\alpha\beta}$.
The inverse limit 
of $\{(M_\alpha, \pi_\alpha)\}_{\alpha\in A}$, given by
$$\X=\left\{x=(x_\alpha)\in \prod_{\alpha\in A}\ :\ \pi_{\alpha\beta}(x_\beta)=x_\alpha \mbox{ if }\alpha\leq \beta\right\},$$
is an $n$-dimensional solenoidal manifold. There is a natural projection 
$\pi:\X\to M$ defined by $\pi(x)=\pi_{\alpha}(x_\alpha)$ for any $\alpha\in A$.

Since $A$ is a countable set, it has a cofinite totally ordered subset \linebreak $A_0=\{\alpha_n\ :\ n\in \N\}$ with $\alpha_n<\alpha_{n+1}$ for all $n$. Therefore, $\X$ can always be seen as the inverse limit of a \emph{tower} of finite coverings
$$\cdots \to M_{\alpha_n}\to\cdots \to M_{\alpha_1}\to M_{\alpha_0}.$$

\begin{definition}
A solenoidal manifold obtained as the inverse limit of a projective limit of finite \emph{regular} coverings of a connected manifold $M$ is a \emph{McCord solenoid}.  
\end{definition}

In \cite{McCord}, M. C. McCord proves that 
these solenoids are \emph{topologically homo\-geneous}, that is, for any two points $x,y \in \mathcal{X}$, there is a homeomorphism $f : \mathcal{X} \to \mathcal{X}$ such that $f(x) = y$. Then $\mathcal{X}$ is a connected principal fibre bundle with fibre a Cantor group and we will say $\mathcal{X}$ is a McCord solenoid based on $M$.
\medskip 

The reciprocal of McCord theorem for topologically homogeneous solenoidal manifolds was obtained by A. Clark and S. Hurder in the compact case. Namely, as proved in \cite{ClarkHurder}, any compact, connected, topologically homogeneous solenoidal manifold is homeomorphic to a McCord solenoid.
\medskip

As explained in Definition \ref{def:solenoidal_manifold}, all solenoidal manifolds considered here are assumed smooth. The inverse limit of any projective system of smooth coverings of a smooth manifold is endowed with the differentiable structure induced by the differentiable structure of the base. In general, these solenoids, regardless of whether they are McCord or not, may admit several inequivalent differentiable structures and other pathological phenomena (see \cite{VerjovskySullivan}). 
\medskip 

A McCord solenoid is always the suspension of a representation\linebreak
$\rho : \pi_1(M) \to G$ where $G$ is a Cantor group acting on itself by left translations (see \cite{VerjovskySullivan}).  In the forthcoming sections, reference to any McCord solenoid $\X$ will be usually interpreted as suspension of a representation $\rho : \pi_1(S) \to G$ of the fundamental group of a compact surface $S$ with a finite number of punctures in a Cantor group $G$ acting on itself by left translations. Furthermore we will assume that $\rho(\pi_1(S))$ is dense in $G$. It will therefore be a minimal hyperbolic solenoidal surface of finite type. 
\medskip 

The inverse limit of a projective system of finite coverings is the suspension of a representation $\rho : \pi_1(M) \to Homeo(K)$, where $K$ denotes the Cantor set. There is a distance on $K$ that is preserved by $\rho(\pi_1(M))$, so $\rho$ is in fact acting by isometries. However, without any further structure, the suspension of a representation $\rho:\pi_1(M)\to Homeo(K)$ is not, in general, an inverse limit of a projective system of finite coverings. 
\medskip 

The universal hyperbolic solenoid $\mathcal{H}^2$ and the solenoidal cusp $\mathcal{C}$ are McCord. The solenoidal surface in Example~\ref{example:Schottky} is not. To see this, notice that a McCord solenoid always admits a completely invariant measure of full support. The transverse dynamics in Example~\ref{example:Schottky} is that of a Schottky group on its limit set $\Lambda$, that contains hyperbolic and parabolic elements that do not preserve such a measure. Example~\ref{example:suspension_with_triadic_solenoid_over_closed_horocycle} is transversely isometric and then admits an unique completely invariant measure up to scaling \cite{Matsumotoequi}. We will soon see that it is not McCord. 

\begin{remark}
 An example of a compact solenoidal surface that is not homogeneous has been constructed by R. M. Schori in \cite{Schori}.
\end{remark}

\begin{remark}
In McCord solenoids, all leaves are homeomorphic. On the other hand, solenoidal surfaces obtained as the inverse limit of a family of coverings that are not regular can have leaves of different topological types. In fact, a single solenoidal surface can have dense leaves that include all possible homeomorphism classes of noncompact surfaces (see \cite{Alvarez-Brum-Martinez-Potrie}).
\end{remark}

\section{Hedlund's Theorem for hyperbolic solenoidal surfaces of finite type} \label{SHedlund}

In \cite{Hedlund}, G. H. Hedlund proved the following theorem:

\begin{namedtheorem}[Hedlund's] \label{thm:Hedlund} If $S$ is a hyperbolic surface of finite area, then all horocycles in $T^1 S$ are either periodic or dense.  
\end{namedtheorem}

Recall that $S$ is the quotient of the hyperbolic plane $\H$ by the action of a discrete group of isometries $\Gamma < \PSL$ having finite covolume. Therefore the unit tangent bundle $T^1 S$ identifies with the quotient $\Gamma \bs \PSL$ and horocycles in $T^1 S$ are the orbits of the natural right action $\PSL \curvearrowleft U$ where $U$ is the unipotent group given by
$$U = \left\{\, \matriz{1}{s}{0}{1} \mid \, s\in \R\, \right\}.$$

We aim to extend Hedlund's theorem to hyperbolic solenoidal surfaces of finite type that are obtained as the inverse limit of a tower of finite coverings. We will start by fixing notation. 

\medskip

Consider a non-compact hyperbolic surface of finite area, $S=\Gamma \bs \H$, where $\Gamma$ is a non-uniform lattice of $\PSL$ which is isomorphic to $\pi_1(S)$. A decreasing sequence of subgroups of finite index $\Gamma_n<\Gamma$ gives rise to a sequence of surfaces $S_n=\Gamma_n \bs \H$ and to a tower of coverings
$$\cdots S_{n+1} \overset{\pi_n}{\longrightarrow} S_n \longrightarrow \cdots \longrightarrow S_1 \overset{\pi_0}{\longrightarrow} S.$$
The surfaces $S_n$ are non-compact and have finite area. If $\Pi_n=\pi_{n-1}\circ\cdots\circ \pi_0$, then $\Pi_n:S_n\to S$ is a finite-to-one covering. For the moment, we will not need any additional assumptions on the groups $\Gamma_n$, in particular, we will not assume that they are normal subgroups of $\Gamma$, which is the condition for the inverse limit to be McCord. Denoting $\Gamma=\Gamma_0$, we have that $S=S_0$.
\medskip

The inverse limit of such a tower is the hyperbolic solenoidal surface of finite type 
$$\X=\varprojlim \, \big(S_{n+1}\overset{\pi_n}{\longrightarrow} S_n\big).$$
Its unit tangent bundle is 
$$\Y=T^1\X=\varprojlim \, \big(T^1S_{n+1}\overset{\pi_n}{\longrightarrow} T^1S_n\big),$$
and $\pi:\Y\to T^1 S$ is the natural projection. We are abusing notation by~calling $\pi_n$ both the covering $S_{n+1}\overset{\pi_n}{\longrightarrow} S_n$ and the covering $T^1S_{n+1}\overset{\pi_n}{\longrightarrow} T^1S_n$.
\medskip

As $S$ has finite volume, Hedlund's theorem describes the dynamics of the horocycle flow $h_{\R}$ on $T^1S$. Indeed, for any $y_0=\Gamma u\in T^1S$, we have the following dichotomy:
\smallskip 

\noindent
(i) The orbit $h_{\R}(y_0)$ is periodic, that is, $\overline{h_{\R}(y_0)} = h_{\R}(y_0)$.
 \smallskip 

\noindent
(ii) The orbit $h_{\R}(y_0)$ is dense, that is, $\overline{h_{\R}(y_0)} = T^1S$.
 \smallskip 

\noindent
In fact, according to \cite{Hedlund} (see also \cite{Eberlein}), the properties of the $h_{\R}$-orbit of $y_0 \in T^1S$ depend on the properties of the limit point $u(+\infty) \in \partial\H$ of the geodesic $u(t) \in \H$ determined by the unit tangent vector $u \in T^1 \H$: if $u(+\infty)$ is parabolic, the horocyclic trajectory is periodic and if $u(+\infty)$ is horocyclic, the orbit is dense. 

Our aim is to demonstrate the following extension, which has already been announced in the introduction: 

\begin{theorem}\label{thm:principal}
Under the previous hypotheses, for any point $y\in \Y$ with image $y_0=\pi(y)\in T^1S$, there are two possible cases:
 \smallskip  

\noindent
(i) The orbit $h_{\R}(y)$ projects onto a periodic orbit $h_{\R}(y_0)$,
  \smallskip  

\noindent
(ii) The orbit $h_{\R}(y)$ is dense, that is, $\overline{h_{\R}(y)} = \Y$.
\end{theorem}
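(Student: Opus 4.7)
The plan is to dispose of case (i) with Hedlund's theorem and concentrate on the remaining case. Since $S$ is a hyperbolic surface of finite area, Hedlund's theorem (Theorem~\ref{thm:Hedlund}) applied to $x=\pi(y)$ tells us that either $h_{\R}(x)$ is periodic---so that we are in case (i)---or $h_{\R}(x)$ is dense in $T^1S$. In the rest of the argument I assume the second alternative and show that $F:=\overline{h_{\R}(y)}=\Y$.

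A first step is that $\pi:\Y\to T^1S$ has compact (Cantor) fibre, hence is a proper and in particular closed map. Thus $\pi(F)$ is closed in $T^1S$, contains the dense orbit $h_{\R}(x)$, and therefore equals $T^1S$; in other words, $F$ meets every fibre. I would then exploit the continuous right action of $G$ on $\Y$ given on lifts by $g\cdot[u,g_0]=[u,g_0 g]$: this action is free, commutes with $h_{\R}$, and its orbits are precisely the fibres of $\pi$. The stabiliser $\Lambda:=\{g\in G:gF=F\}$ is a closed subgroup of $G$, and the key reduction is that $\Lambda=G$ forces $F=\pi^{-1}(\pi(F))=\Y$.

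The bridge between $\Lambda$ and the hyperbolic geometry is the return cocycle. Writing $y=[u_0,g_0]$, every sequence $(\gamma_n,t_n)\in\Gamma\times\R$ with $\gamma_n u_0 h_{t_n}\to u_0$ and $\rho(\gamma_n)\to g_*$ produces
\[
h_{t_n}(y)=[u_0 h_{t_n},g_0]=[\gamma_n u_0 h_{t_n},\rho(\gamma_n)g_0]\longrightarrow[u_0,g_*g_0]=(g_0^{-1}g_*g_0)\cdot y,
\]
so $(g_0^{-1}g_*g_0)\cdot y\in F$, and a short argument using closedness of $F$ promotes this to $g_0^{-1}g_*g_0\in\Lambda$. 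The set
\[
\Lambda_{u_0}:=\{g_*\in G:\exists (\gamma_n,t_n)\in\Gamma\times\R,\ \gamma_n u_0 h_{t_n}\to u_0,\ \rho(\gamma_n)\to g_*\}
\]
is a closed subgroup of $G$, which I would check via the standard trio: closure by diagonal extraction, product by concatenating near-returns $\gamma_n\delta_{m(n)}u_0 h_{t_n+s_{m(n)}}\to u_0$, and inverse from $\gamma_n^{-1}u_0 h_{-t_n}\to u_0$. Hence $g_0^{-1}\Lambda_{u_0}g_0\subseteq\Lambda$, and the problem reduces to $\Lambda_{u_0}=G$.

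The hard part is this last identity, and it is where the McCord hypothesis $\overline{\rho(\Gamma)}=G$ enters essentially. Density of $\rho(\Gamma)$ alone does not suffice, because the horocycle through $u_0$ in $T^1\H$ does not in general pass near a given $\delta_k u_0$ with $\rho(\delta_k)\to g_*$, so one must control which $\Gamma$-translate effects the return. My preferred route is an averaging reformulation: consider $\tilde\mu_T=\tfrac{1}{T}\int_0^T\delta_{h_s y}\,ds$ on $\Y$. By Dani--Smillie on $T^1S$ the projections $\pi_*\tilde\mu_T$ converge to the Liouville probability, and properness of $\pi$ upgrades this to tightness of $(\tilde\mu_T)$ on $\Y$. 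Any weak-$*$ subsequential limit $\tilde\mu$ is then an $h_{\R}$-invariant probability on $\Y$ projecting to Liouville; lifting to $\PSL\times G$, the horocycle flow preserves the second coordinate, and $\Gamma$-equivariance forces the conditional measures on the $\pi$-fibres to be invariant under left translation by every $\rho(\gamma)$. Density of $\rho(\Gamma)$ in $G$ together with continuity of the left-translation action on probability measures pins them down as the Haar measure of $G$, so $\tilde\mu=\nu_{\Y}$. Since $\operatorname{supp}(\nu_{\Y})=\Y$ and $\operatorname{supp}(\tilde\mu)\subseteq F$, we conclude $F=\Y$. The technical heart lives in this disintegration step, which is precisely where $\overline{\rho(\Gamma)}=G$ becomes indispensable.
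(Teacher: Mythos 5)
Your skeleton up to the averaging step is sound (Hedlund downstairs, properness of $\pi$, the right $G$-action commuting with $h_{\R}$, Dani--Smillie equidistribution of the non-periodic base orbit plus tightness), but the step you yourself call the technical heart is where the argument breaks. Lifting a weak-$*$ limit $\tilde\mu$ to a $\Gamma$-invariant measure on $\PSL\times G$ and disintegrating over $\PSL$, what $\Gamma$-invariance gives you is \emph{equivariance} of the fibre conditionals, $\mu_{\gamma u}=\rho(\gamma)_*\mu_u$, together with $\mu_{uh_t}=\mu_u$ from $h_{\R}$-invariance; it does \emph{not} give $\rho(\gamma)_*\mu_u=\mu_u$ for a fixed fibre. (Equivalently: the conditional on $\pi^{-1}(x)$ is canonically a measure on a $G$-torsor, identified with a measure on $G$ only up to left translation by $\rho(\Gamma)$; "well-defined up to translation" is not "translation-invariant".) So density of $\rho(\Gamma)$ in $G$ cannot be applied to a single conditional, and nothing in your argument excludes a measurable family $u\mapsto\mu_u$ that twists along the cocycle $\rho$ while never being Haar. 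What you actually need is the relative unique ergodicity of the compact group extension $\Y\to T^1S$ over the Liouville measure, i.e.\ that $\nu_{\Y}$ is the unique $h_{\R}$-invariant probability projecting to Liouville; the standard proof of this runs through the \emph{ergodicity of $\nu_{\Y}$ under the horocycle flow} (if $\nu_{\Y}$ is $h_{\R}$-ergodic, average your $\tilde\mu$ over the right $G$-action to get $\nu_{\Y}$, and extremality forces $\tilde\mu=\nu_{\Y}$). That ergodicity is not a consequence of density of $\rho(\Gamma)$ alone: it is exactly what the paper establishes in Step 1 of its proof, via Lemma~\ref{lemma:unique_ergodicity_of_dense_subgroup}, Furstenberg's correspondence to ergodicity of the right $\PSL$-action on $(\Y,\nu_{\Y})$, and then Moore's ergodicity theorem to pass from $\PSL$ to the unipotent subgroup $U$. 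Ruling out a proper-subgroup reduction by a \emph{measurable} transfer function is precisely the content of that step; a constant transfer function is all that density of $\rho(\Gamma)$ excludes.

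Once that ingredient is added your route does close (and is genuinely different from the paper's: the paper does not use Dani--Smillie at all, but extracts a topologically transitive point from ergodicity of $\nu_{\Y}$, spreads density along the fibre over its base point using the right $G$-action, and finishes with Hedlund downstairs plus compactness of $G$). As written, however, the claim that "$\Gamma$-equivariance forces the conditional measures to be invariant under every $\rho(\gamma)$" is a genuine gap, and it sits exactly where the paper invokes Moore's theorem.
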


\begin{proof}
 Let $y\in \Y$ be a point such that $\pi(y)=y_0$ is not periodic for the horocycle flow in $T^1S$. Recall that $y$ is a sequence
 $$y=(y_0,y_1,y_2,\ldots)\in \prod_{n\in\N}T^1S_n,$$
 and for all $n$, the point $y_n$ is not periodic for the horocycle flow in $T^1S_n$. 
 
 Let us take $x=(x_n)_{n\in \N}\in \Y$,   $\varepsilon>0$ and $N\in\N$. We define the set $U=U(x,\varepsilon,N)$ by
 $$U=\{x'=(x'_n)_{n\in\N}\in \mathcal{Y}\ :\ d_n(x_n,x'_n)<\varepsilon \ \forall n\leq N\},$$
 where $d_n$ is the Riemannian distance in $T^1S_n$. Sets of this form constitute a basis of the topology of $\Y$, so we have to prove that the horocycle orbit of $y$ intersects $U$. 
 
 In $T^1S_N$, the point $y_N$ has a dense horocycle orbit. Therefore, there exist a time $t\in\R$ such that $d_N(h_t(y_N),x_N)<\varepsilon$. Since for every $n<N$ the covering $\pi_n\circ\cdots\circ \pi_{N-1}:T^1S_N\to T^1S_n$ is 1-Lipschitz, we have that
 $$d_n(h_t(y_n),x_n)\leq \varepsilon$$
 for all $n\leq N$. That is, $h_t(y)\in U$.
\end{proof}

\begin{remark}
 In the case of a McCord solenoid, one can prove this result with arguments of a more ``homogeneous'' flavor. An alternative proof of Theorem~\ref{thm:principal} for McCord solenoids is the following:
\end{remark}

\begin{proof}
A McCord solenoid is the suspension of a representation $\rho : \Gamma \to G$ of the Fuchsian group $\Gamma$ as a dense subgroup of a Cantor group $G$. That is, $\Y$ is
the principal fibre bundle 
$$\Y=\Gamma \bs (\PSL\times G)$$ 
over the unit tangent bundle $T^1S=\Gamma \bs \PSL$, where the action of $\Gamma$ in $\PSL\times G$ is given by
$$\gamma(x,g)=(\gamma x, \rho(\gamma)g).$$
$\Y$ has finite ``volume'' with respect to the measure $\nu_{\Y}$ induced by the Haar measure of the product group $\PSL \times G$.

The measure $\nu_{\Y}$ is ergodic for the right $\PSL$-action on $\Y$. This follows from the fact that $\rho(\Gamma)$ is dense in $G$. In this setting, Moore's ergodicity theorem (see \cite[Chapter 2]{Zimmer}) implies that the horocycle flow on $\Y$ is also ergodic, and therefore transitive. If $x=\Gamma(u,g)\in \Y$ has dense horocycle orbit, so does $\Gamma(u,g')$ for any $g'\in G$, because $G$ acts by right translations in $\PSL\times G$ and commutes with both the $\Gamma$-action and the horocycle flow.   

Now, we will see that for any $y\in\Y$ such that $\pi(y)$ is not periodic in $T^1S$, there exists $g'\in G$ such that $x'=\Gamma (u,g')\in \overline{h_{\R}(y)}$. In fact, since $h_{\R}(\pi(y))$ is dense in $T^1S$ there exists a sequence of times $s_n$ such that $h_{s_n}(\pi(y))\to \pi(x)$. Up to chosing a subsequence, because $G$ is compact, $h_{s_n}(y)$ converges in $\Y$ to a point $x'\in \pi^{-1}(\pi(x))$, as claimed. 
\end{proof}

\bigbreak

Let $\Y$ be, as in Theorem~\ref{thm:principal}, the inverse limit of a tower of coverings over $T^1S$, where $S$ is a hyperbolic surface of finite area. The geodesic and horocycle flows in $\Y$ inherit many dynamical properties from the flows in the surfaces $T^1S_n$, $n\in \N$. Theorem~\ref{thm:principal} is an example of this phenomenon; the following proposition constitutes another one.

\begin{proposition}\label{proposition:topological_mixing}
  The geodesic and horocycle flows on $\Y$ are topologically mixing.
\end{proposition}

Recall that a flow $\varphi_t$ on $\Y$ is topologically mixing if, given open sets $U$ and $V$ in $\Y$, there exists $T\in \R$ such that for all $t>T$
\begin{equation}\label{eqn:topological_mixing}
\varphi_t(U)\cap V\neq \emptyset.
\end{equation}

\begin{proof}
For each $n\in\N$, let $p_n:\Y\to T^1S_n$ be the projection
\begin{eqnarray*}
p_n:\Y &\to& T^1S_n\\
x &\mapsto &x_n.
\end{eqnarray*}
A subbasis of the product topology on $\Y$ is given by sets of the form
$$p_n^{-1}(U_n),$$
as $n$ varies in $\N$ and $U_n$ is an open subset of $T^1S_n$.

Let $\varphi_t$ be either the geodesic or the horocycle flow in $\Y$. Let $\varphi^{(n)}_t$ be the corresponding (i.e. geodesic or horocycle) flow in $T^1S_n$, for all $n$. We will prove (\ref{eqn:topological_mixing}) for sets $U$ and $V$ of this form, namely
$$U=p_n^{-1}(U_n) \mbox{ and } V=p_m^{-1}(V_m).$$
Without loss of generality (by shrinking one of this sets if necessary) we will assume that $n=m$.

The flow $\varphi_t^{(n)}$ in $T^1S_n$ is topologically mixing, so there exists a $T$ such that for all $t>T$
$$\varphi^{(n)}_t(U_n)\cap V_n\neq \emptyset.$$
On the other hand, $\varphi^{(n)}_t\circ p_n=p_n\circ\varphi_t$.
Therefore, 
\begin{eqnarray*}
 \varphi_t(U)\cap V &=& \varphi_t(p_n^{-1}(U_n))\cap p_n^{-1}(V_n)\\
 &=& p_n^{-1}(\varphi^{(n)}_t(U_n))\cap p_n^{-1}(V_n)\\
 &=& p_n^{-1}(\varphi^{(n)}_t(U_n)\cap V_n)\neq \emptyset,
\end{eqnarray*}
as claimed. 
\end{proof}

\section{Minimal sets for the horocycle flow}
\label{Sminimalsets}

For a hyperbolic surface $S$, it is known from \cite{Dal'Bo} that all horocycles are dense if and only if the surface is compact. On the other hand, there exist non-compact hyperbolic surfaces for which the horocycle flow has no minimal sets, see \cite{MatsumotoMinimal}.

If $\X$ is a hyperbolic solenoidal surface of finite type and $\Y=T^1\X$ is its unit tangent bundle, the horocycle flow is never minimal in $\Y$. To see this, consider a fibration $\pi:\Y\to T^1S$ over a unit tangent bundle of a non-compact hyperbolic surface $S$ of finite area. If $H\subset T^1S$ is a closed horocyclic orbit, $\pi^{-1}(H)\subset \Y$ is a closed invariant proper subset. Furthermore, since $\pi^{-1}(H)$ is compact, this guarantees the existence of minimal sets for the horocycle flow in $\Y$.

\begin{definition} \label{def:minimalcusp}
We will say that a compact set $\mathcal{S}\subset \Y$ which is invariant under the horocycle flow in $\Y$ is a \emph{minimal set over a closed horocycle} if $\mathcal{S}$ is minimal for the horocycle flow and $\pi(\mathcal{S})$ is a closed horocyclic orbit in $T^1S$.
\end{definition}

In this section these minimal sets are described for hyperbolic solenoidal surfaces defined by towers of coverings. 

\subsection{A trichotomy for inverse limits of surface coverings} 

We will use the same notation as in the previous section where 
 $S=\Gamma \bs \H$ is a non-compact hyperbolic surface of finite area, and $\X$ is the inverse limit of a tower of coverings
$$\cdots S_{n+1} \overset{\pi_n}{\longrightarrow} S_n \longrightarrow \cdots \longrightarrow S_1 \overset{\pi_0}{\longrightarrow} S$$
corresponding to a decreasing sequence of subgroups $\Gamma_n<\Gamma$ of finite index.
%The surfaces $S_n$ are non-compact and have finite area. If $\Pi_n=\pi_{n-1}\circ\cdots\circ \pi_0$, then $\Pi_n:S_n\to S$ is a finite-to-one covering. For the moment, we will not need any additional assumptions on the groups $\Gamma_n$, in particular, we will not assume that they are normal subgroups of $\Gamma$, which is the condition for the inverse limit to be McCord.
%\medskip
%The inverse limit of such a tower is the hyperbolic solenoidal surface of finite type 
%$$\X=\varprojlim \, \big(S_{n+1}\overset{\pi_n}{\longrightarrow} S_n\big).$$
Its unit tangent bundle is 
$$\Y=T^1\X=\varprojlim \, \big(T^1S_{n+1}\overset{\pi_n}{\longrightarrow} T^1S_n\big),$$
and $\pi:\Y\to T^1 S$ is the natural projection. %We are abusing notation by~calling $\pi_n$ both the covering $S_{n+1}\overset{\pi_n}{\longrightarrow} S_n$ and the covering $T^1S_{n+1}\overset{\pi_n}{\longrightarrow} T^1S_n$.
\medskip

For every cusp in $T^1S=\Gamma\bs \PSL$ there is a one-parameter family of closed horocyclic orbits in $T^1S$. In $\Y$ there are horocyclic orbits $h_{\R}(y)$ which project onto closed orbits $h_{\R}(x)$, for $x\in T^1S$. This section aims to understand their closures. 
Together with Theorem~\ref{thm:principal}, this fully describes the minimal sets for the horocycle flow in $\Y$.
\medskip

These minimal sets can be modelled on some classical dynamical systems on a Cantor set:~\emph{odometers}. Odometers are defined as follows.

\begin{definition}\label{def:odometers}
Let $(F_n,T_n,q_n)_{n\in\N}$ be a sequence of finite sets $F_n$, bijective maps $T_n:F_n\to F_n$ and surjective maps $q_n:F_{n+1}\to F_n$ which satisfy $T_n\circ q_n=q_n\circ T_{n+1}$. When the cardinality of the $F_n$ is strictly increasing and we endow the $F_n$ with the discrete topology, the inverse limit 
$$Z= \varprojlim \, \big(q_n:F_{n+1}\to F_n\big)$$
is a Cantor set. The maps $T_n$ induce a homeomorphism $T:Z\to Z$, which is called an \emph{odometer}. 
\end{definition}

\begin{remark}
 It is a well-known and easy fact that $T$ is minimal if and only if for every $n$, the action of $T_n$ on $F_n$ is transitive. Namely, if $T_n$ is a permutation of $F_n$ consisting of a single cycle.
\end{remark}

If $\xi \in \partial \H$ is fixed by a parabolic element in $\Gamma$, it gives rise to a cusp in $S$. Under the covering $\Pi_n: S_n\to S$ this cusp can lift to exactly one cusp in $S_n$ or to several cusps in $S_n$, depending on how $\Gamma_n$ acts on the orbit $\Gamma\xi\subset \partial \H$.
We can now detail the statement of the second theorem announced in the introduction which completes the description given in Theorem~\ref{thm:principal}.

\begin{theorem}\label{thm:dynamics_over_the_cusps}

Let $\Gamma\subset \PSL$ be a non-uniform lattice, and $(\Gamma_n)_{n\in\N}$ be a decreasing sequence of finite index subgroups of $\Gamma$. This defines the inverse limit $\Y$ described above, and the projection $\pi:\Y\to T^1S$. 

Let $x \in T^1S$ be a periodic point for the horocycle flow. It is associated to a cusp in $S$ which we will call $\xi$. For each $n\in \N$, let $c_n$ be the number of cusps in $S_n$ which project via $\Pi_n$ onto the cusp $\xi$. (Notice that $c_n\leq c_{n+1}\ \forall n$.)
Let $\M=\pi^{-1}(h_{\R}(x))$. 
\smallskip

\noindent
(1)  If $c_n=1$ for all $n$, then $\M$ is a one-dimensional solenoid that is a minimal set for the horocycle flow on $\Y$.
\smallskip

\noindent
(2) If $\max_{n\in\N}c_n=m\in\N$, then $\M$ is the disjoint union of $m$ one-dimensional solenoids that are minimal sets for the horocycle flow on $\Y$.
\smallskip

\noindent
(3) If $\{c_n\}$ is unbounded,  $\M$ is the disjoint union of infinitely many minimal sets for the horocycle flow. 
\end{theorem}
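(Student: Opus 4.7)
My plan is to realize $\M = \pi^{-1}(h_{\R}(x))$ as the inverse limit of finite disjoint unions of closed horocycles, describe its decomposition into connected components in purely combinatorial terms, and then identify the transverse return map of the horocycle flow on each component with either a cyclic rotation or an odometer, from which minimality is immediate.

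First I would unpack the preimage $\Pi_n^{-1}(h_{\R}(x)) \subset T^1 S_n$. Since every closed horocyclic orbit in a non-compact hyperbolic surface of finite area encircles some cusp, this preimage splits as a disjoint union of exactly $c_n$ closed horocycles $H_n^1, \ldots, H_n^{c_n}$, one around each cusp of $S_n$ lying over $\xi$, and each restriction $H_n^j \to h_{\R}(x)$ is a covering of circles of some degree $e_n^j$, so $H_n^j$ has period $e_n^j \ell$ where $\ell$ is the period of $h_{\R}(x)$. The coverings $\pi_n : T^1 S_{n+1} \to T^1 S_n$ induce surjective combinatorial maps $q_n : F_{n+1} \to F_n$ with $F_n = \{1, \ldots, c_n\}$, and each restricted circle covering $H_{n+1}^k \to H_n^{q_n(k)}$ has some degree $d_n^k \geq 1$ satisfying $e_{n+1}^k = d_n^k \cdot e_n^{q_n(k)}$. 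Consequently
$$\M = \varprojlim_n \bigsqcup_{j=1}^{c_n} H_n^j,$$
and the set of connected components of $\M$ is in bijection with $F = \varprojlim_n (F_n, q_n)$, the component corresponding to $f = (j_n) \in F$ being the connected inverse limit $\M_f = \varprojlim_n H_n^{j_n}$.

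The three cases are then distinguished by the combinatorial nature of $F$. In case (1) we have $F_n = \{1\}$ for every $n$, so $F$ is a singleton and $\M$ is connected. In case (2), $c_n$ stabilizes at $m$ and, for $n$ large enough, the surjections $q_n$ are forced to be bijections, yielding $|F| = m$ and decomposing $\M$ into exactly $m$ components. In case (3), the surjectivity of the $q_n$ together with the unboundedness of $c_n$ forces $F$ to be infinite, so $\M$ has infinitely many components. In each case, depending on whether the sequence $(e_n^{j_n})$ is bounded or not, the component $\M_f$ is either a single closed horocycle or a genuine (non-trivial) $1$-dimensional solenoid.

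For the minimality step I would fix a component $\M_f$ and pick a transversal $\Sigma$ to the horocycle flow inside a single fiber of $\pi|_{\M_f}$. This $\Sigma$ is naturally identified with $\varprojlim_n \Z / e_n^{j_n} \Z$, and the horocycle return map becomes the ``add $1$'' translation on this inverse limit: a finite cyclic rotation when the $e_n^{j_n}$ stabilize (so $\M_f$ is a circle), or a minimal odometer when $e_n^{j_n} \to \infty$. Both are minimal, and minimality of the return map on the transversal is equivalent to minimality of the horocycle flow on $\M_f$. The main obstacle I expect is the careful bookkeeping of this identification: one must track how the horocycle flow parameter in $\M_f$ interacts with the various coverings $H_{n+1}^{j_{n+1}} \to H_n^{j_n}$ so that the return map matches the odometer exactly; once this is done, minimality reduces to the classical fact that ``add $1$'' on $\varprojlim \Z / e_n \Z$ is minimal whenever the $e_n$ are unbounded.
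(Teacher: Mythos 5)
Your proposal is correct and takes essentially the same route as the paper: both realize $\M$ (or each of its pieces) as the suspension of an odometer acting on the Cantor fiber over the periodic point $x$, with minimality coming from transitivity of the finite-level return maps, the paper working with the full fiber $F_n=\Pi_n^{-1}(x)$ and the cycle decomposition of the time-one permutation, you working component-by-component with the index sets of cusps and the cyclic odometers $\varprojlim \Z/e_n^{j_n}\Z$. This is only a difference in bookkeeping, though your explicit decomposition of $\M$ by $\varprojlim(F_n,q_n)$ makes the treatment of case (3) somewhat more uniform and complete than the paper's brief refinement argument.
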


\begin{proof} Assume without loss of generality that the vector $x$ is periodic of period one. Let $F_n=\Pi_n^{-1}(x)\subset T^1S_n$. (We are slightly abusing notation by using the same name $\Pi_n$ for the projections $S_n\to S$ and $T^1S_n\to T^1S$.) Then $F_n$ is a finite set of cardinal $[\Gamma:\Gamma_n]$. Let $T_n$ be the restriction to $F_n$ of the time-one map of the horocycle flow on $T^1S_n$.

If $c_n=1$, this means that the closed horocyclic orbit through $x$ in $T^1S$ is the image under $\Pi_n$ of a unique closed horocyclic orbit on $T^1S_n$, of period $[\Gamma:\Gamma_n]$. In this case $T_n$ acts transitively on $F_n$. If this is the case for all $n$, the limit map $T$ is a minimal odometer on the Cantor set $\pi^{-1}(x)$, and its suspension gives the 1-dimensional solenoid $\M$, which is foliated by dense horocyclic orbits.

If the maximum value of $c_n$ is $m\in \N$, there exists some $n_0$ such that $c_n=m$ for all $n\geq n_0$. Namely, $S_{n_0}$ has $m$ different cusps $\xi_1,\ldots,\xi_m$ that project onto $\xi$. This means that $T_{n_0}$ is a permutation of $F_{n_0}$ that decomposes into $m$ disjoint cycles. Reasoning as in the previous case with each of these cusps, we see that $\M$ is the disjoint union of $m$ 1-dimensional solenoids in $\Y$ which are minimal for the horocycle flow.

If the sequence $c_n$ is unbounded, let $x_1,\cdots,x_{c_n}\in T^1S_n$ be vectors such that $\Pi_n(x_i)=x$ for $i=1,\ldots,c_n$ and which correspond to different cusps in $S_n$. 
%Let $\pi :\Y\to T^1S_n$ be the canonical projection (with another slight abuse of notation by replacing $S$ with $S_n$). 
As in the proof of Proposition~\ref{proposition:topological_mixing}, let $p_n :\Y\to T^1S_n$ be the canonical projection (where $p_0=\pi$).
Then 
$$\M=\bigsqcup_{i=1}^{c_n}  p_i^{-1}\big(h_{\R}(x_i)\big)$$ 
is a partition of $\M$ into $c_n$ compact, disjoint one-dimensional solenoids which are invariant under the horocycle flow in $\Y$. This partition at level $n+1$ is a refinement of that at level $n$. Therefore, $\M$ is partitioned infinitely many minimal sets. 
\end{proof}

\subsection{Examples of McCord solenoids of different class}
 
In Theorem~\ref{thm:dynamics_over_the_cusps}, we have described three different situations according to the number of minimal sets projecting onto a closed horocycle around a cusp. Here we provide examples for each of these classes.

\begin{example}[A McCord solenoid of class (1)]
%A McCord solenoid with finitely many cuspidal points.
\label{example:class1}

Here we will give an example of McCord solenoid based on a surface $S=S_0$ with exactly two cusps. For each of them, $c_n = 1$ for all $n \in \N$.
Let $S_n=S_{2^n,2}$ be a non-compact hyperbolic surface of genus $2^n$ with 2 cusps. There is a 2-to-1 covering $\pi_n:S_{n+1}\to S_n$ for which each cusp of $S_{n}$ lifts to  a cusp of $S_{n+1}$ and for which every handle of $S_n$ lifts to two handles in $S_{n+1}.$ See Figure~\ref{figure:keep_cups}. Since these coverings are 2-to-1, they are regular. The limit
$$\X=\varprojlim \, \big(S_{n+1}\overset{\pi_n}{\longrightarrow} S_n\big)$$
has exactly one minimal set over each closed horocycle of $S$.

 \begin{figure}[h!]  
 \includegraphics[width=0.8\textwidth]{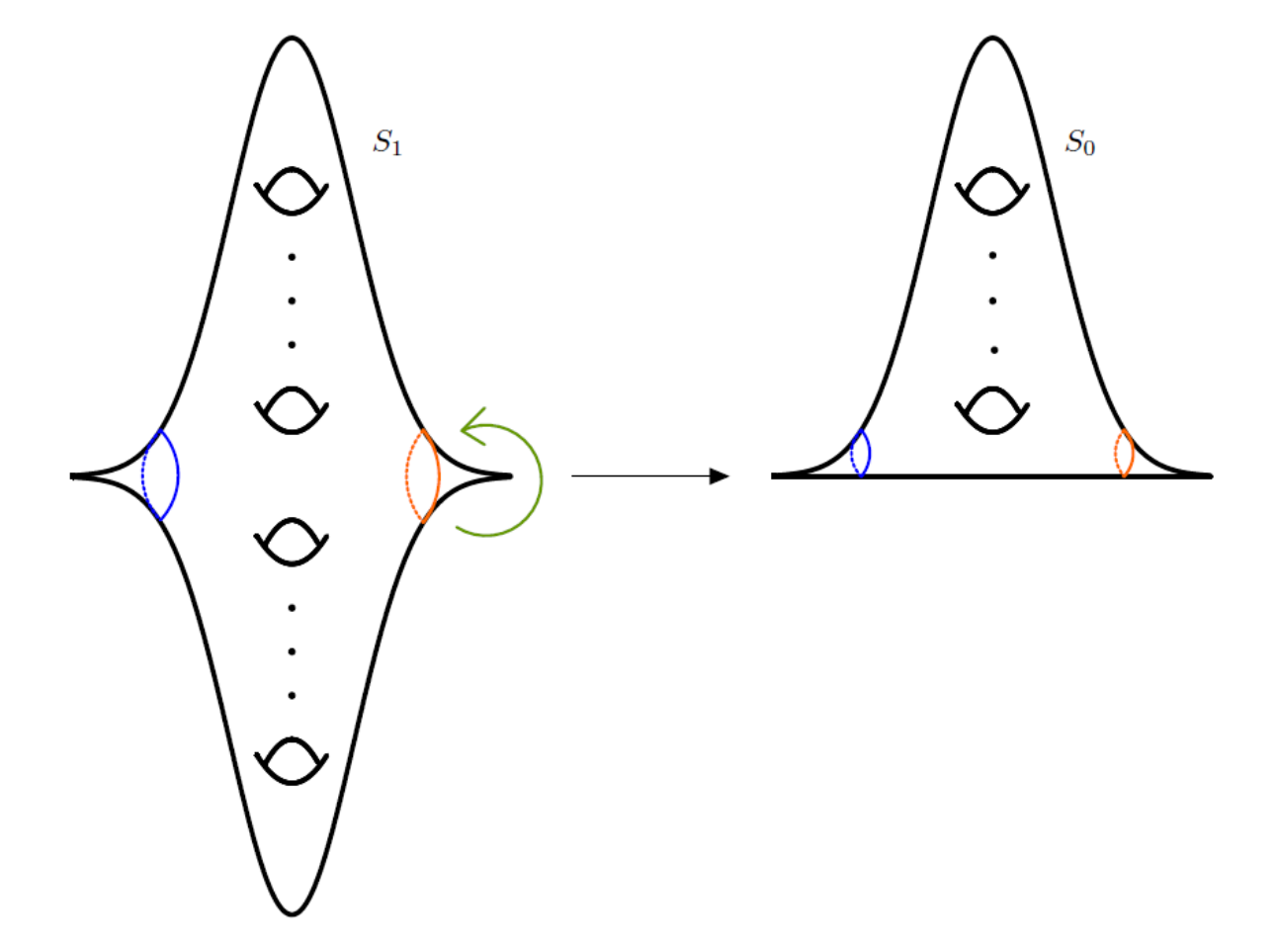}
 \caption{2-to-1 covering that keeps the number of cusps}
 \label{figure:keep_cups}
 \end{figure}

If $\Y=\varprojlim \, \big(T^1S_{n+1} \overset{\pi_n}{\longrightarrow} T^1S_n\big)$ is its unit tangent bundle, $\pi:\Y\to T^1S$ is the projection onto $T^1S$ and $H$ is a closed horocyclic orbit in $T^1S$, then $\pi^{-1}(H)$ is the dyadic solenoid $\mathcal{S}_2$. The dyadic solenoid is the suspension of the translation by 1 in the dyadic integers, or, equivalently, the inverse limit of a tower of 2-to-1 coverings of the circle.
\end{example}

\begin{example}[A McCord solenoid of class (1) over a surface with $m > 1$ cusps]
In the above example, the base surface $S_0$ had two cusps, as well as its coverings $S_n$. A similar example can be constructed starting from a surface $S_0$ with any number of cusps $m>1$. To see this, it is enough to show that if $S=S_{g,m}$ is a surface of genus $g\geq 0$ and $m$ cusps, there is a regular covering $p:S'\to S$ where $S'$ is also a surface with $m$ cusps.

The fundamental group of $S$ is 
$$
\Gamma= \langle \alpha_1,\beta_1,\ldots,\alpha_g,\beta_g,\gamma_1,\ldots,\gamma_m\, : \, \prod_{i=1}^g[\alpha_i,\beta_i]=\gamma_1\cdots\gamma_m \rangle.
$$
(Even if this argument is purely topological, we will think of $\Gamma$ as a Fuchsian group and of $\H$ as the universal covering of $S$.)
It is freely generated by $\{\alpha_1,\beta_1,\ldots,\alpha_g,\beta_g,\gamma_1,\ldots,\gamma_{m-1}\}$. Therefore, there is a group homomorphism $\varphi:\Gamma\to \Z/m\Z$ defined by 
\begin{eqnarray*}
 \varphi(\alpha_i)&=&0\mod m\ \mbox{ for } i=1,\ldots,g\\
 \varphi(\beta_i)&=&0\mod m \mbox{ for } i=1,\ldots,g\\
 \varphi(\gamma_j)&=&1\mod m \mbox{ for } j=1,\ldots,m-1.
\end{eqnarray*}
From the relation that defines $\Gamma$, it follows that $\varphi(\gamma_m)=1\mod m$.

Let $\Gamma'=\ker(\varphi)$, $S'=\Gamma'\bs\H$ and $p:\Gamma'\bs\H\to \Gamma\bs\H$. Then $p$ is a regular covering because $\Gamma'\lhd\Gamma$, and it has $m$ sheets. 

Finally, $S'$ has $m$ cusps. This is because $\varphi(\gamma_i)$ has order $m$ in $\Z/m\Z$ for all $i$,  so a closed horocycle in $S'$ will always project onto a closed horocycle in $S$ $m$-to-1. 

\label{example:class1m}

\end{example}

\begin{example}[A McCord solenoid of class (2)]
\label{example:class2}
A slight modification of the two previous constructions will yield McCord hyperbolic solenoidal surfaces with a finite number of minimal sets over each closed horocycle. 

Let $S_0=S_{1,1}$ be a hyperbolic surface of genus one and one cusp. Its fundamental group is $\Gamma_0=\langle\alpha, \beta\rangle$, where the commutator $\gamma=[\alpha,\beta]$ represents a curve that turns once around the cusp. There is a 2-to-1 covering $\pi_0:S_1\to S_0$ corresponding to the subgroup $\Gamma_1 \lhd \Gamma_0$ that is (freely) generated by $\alpha^2$, $\beta$ and $\gamma$. The surface $S_1$ has genus one and two cusps. See Figure~\ref{figure:duplication_of_cusps}. The covering is regular because $\Gamma_1$, having index 2, is normal in $\Gamma_0$.

 \begin{figure}[h!]  
 \includegraphics[width=0.9\textwidth]{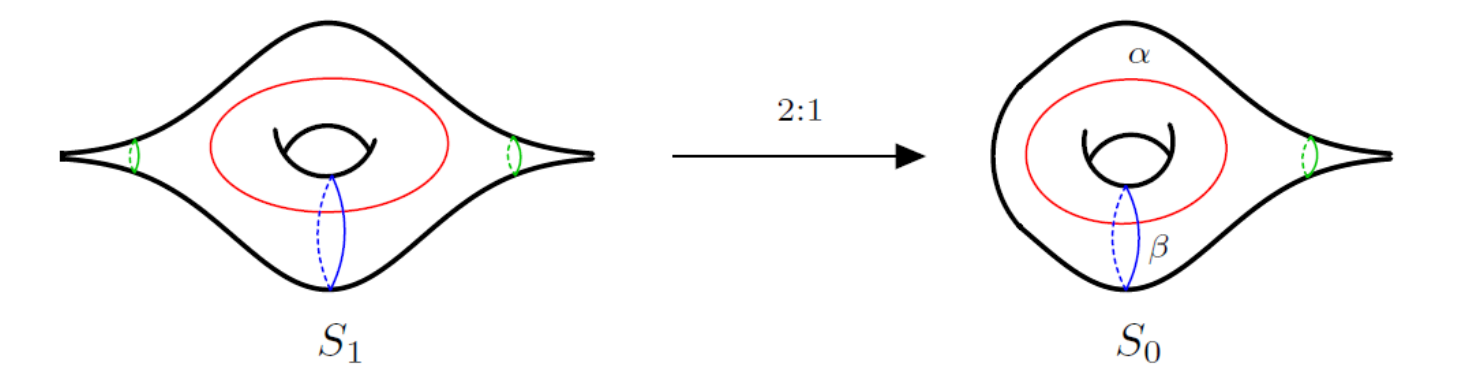}
 \caption{2-to-1 covering that duplicates the number of cusps}
 \label{figure:duplication_of_cusps}
 \end{figure}

Now composing with the 2-to-1 coverings described in Example~\ref{example:class1} above, we obtain a sequence of finite-to-one coverings $\Pi_n:S_n \to S_0$ such that $c_n = 2$ for all $n$.

To obtain an example based on the same surface $S_0 = S_{1,1}$ with $m$ minimal sets over each closed horocycle of $S_0$, it is enough to replace the 2-to-1 covering from $S_1 = S_{1,2}$ to $S_0$ with an $m$-to-1 (cyclic) covering from $S_1 = S_{1,m}$ to $S_0$ and the 2-to-1 coverings from Example~\ref{example:class1} with the $m$-to-1 coverings from Example~\ref{example:class1m}.

 \end{example}

\begin{example}[A McCord solenoid of class (3) with infinitely minimal sets which are closed orbits]
\label{example:class3closed}
 Let $S_0=S_{1,1}$ and $\pi_0:S_1\to S_0$ be the surface and the 2-to-1 covering of the previous Example~\ref{example:class2}. See again Figure~\ref{figure:duplication_of_cusps}. 
Similarly, $S_1$ has a 2-to-1 covering that has genus one and 4 cusps, which has a 2-to-one covering of genus one and 8 cusps, etc. There is a tower of 2-to-1 coverings $\pi_n:S_{n+1}\to S_n$, where $S_n$ has $2^n$ cusps for all $n$. 
 
 The inverse limit
 $$\X=\varprojlim \, \big(S_{n+1} \overset{\pi_n}{\longrightarrow} S_n\big)$$
 is a hyperbolic solenoidal surface of finite type, over the base $S_0$ which has one cusp, for which $c_n\to\infty$. 
 
 When we take the coverings of the unit tangent bundles, each closed horocycle of length one in $T^1S_n$ lifts, via $\pi_n$, to 2 closed horocycles of length one. 
 For the inverse limit
 $$\Y=\varprojlim \, \big (T^1S_{n+1} \overset{\pi_n}{\longrightarrow} T^1S_n \big)$$
 this means that the minimal sets for the horocycle flow in $\Y$ are closed horocyclic orbits.
\end{example}

To get a McCord solenoidal surface of class (3)  for which the minimal sets for the horocycle flow are not closed orbits, we need a sequence $(\Gamma_n)_{n\in\N}$ of non-uniform lattices of $\PSL$ such that 
\begin{itemize}
 \item $\Gamma_{n+1}\lhd \Gamma_n$ for all $n$,
\item if $c_n$ is the number of cusps of $S_n=\Gamma_n\bs \H$, then $\lim_{n\to\infty}c_n=\infty$, and
 \item if $[\Gamma_n:\Gamma_{n+1}]=d_n$  then $c_{n+1}<d_nc_n$ for an infinite number of $n\in\N$.
 \end{itemize}
 
 The next example satisfies these three conditions. 

\begin{example}[A McCord solenoid of class (3) with minimal sets which are 1-dimensional solenoidal manifolds]\label{ex:arithmetic}

For $n\geq 2$, we consider the projection $q_n:\SLZ \to {\rm SL}2,\Z / n\Z)$ given by

\begin{equation*}
  q_n\left(\begin{array}{cc} a&b\\ c&d \end{array}\right)=\left(\begin{array}{cc} a\mod n & b \mod n\\ c \mod  n&d \mod n \end{array}\right).
\end{equation*}
It is an epimorphism of groups. This fact is not immediately obvious but it is well known.

\begin{definition}
 The group $\Gamma(n) = \ker(q_n)$ is called the \emph{principal congruence subgroup} of level $n$.
\end{definition}

Let $\Gamma_n$ be the subgroup of $\PSLZ$ obtained as the  image of $\Gamma(n)$ under the projection $\SLZ \to \PSLZ$. For $n=1,2$ the projection $\Gamma(n) \to \Gamma_n$ is 2-to-1, and for $n\geq 3$ it is injective. 

Notice that $\Gamma_1=\PSLZ$ and $\Gamma_1\bs \H$ is not a surface but an orbifold. Nevertheless, $\Gamma_n$ is torsion-free for $n\geq 3$. Much is known about the $\Gamma_n$ and the \emph{modular surfaces} $S_n=\Gamma_n\bs \H$. For example, $\Gamma_5\bs \H$ is a surface of genus 0 and 20 cusps. The surface $\Gamma_7\bs \H$, is the so-called the \emph{Klein cubic}, and has genus 3.

We will use the following well-known facts about $\Gamma_n$ (see \cite{Shimura1971}):
\medskip 

\noindent
(1) The subgroup $\Gamma_n < \PSLZ$ has index 
\begin{align*} 
[\PSLZ:\Gamma_n] & =\frac{1}{2}n^3\prod_{\small{\begin{array}{cc} p \mbox{ prime }\\ p| n \end{array}}} \left(1-\frac{1}{p^2}\right) \\
 & = \frac{1}{2}n^2\varphi(n)\prod_{\small{\begin{array}{cc} p \mbox{ prime }\\ p| n \end{array}}} \left(1+\frac{1}{p}\right),
 \end{align*}
 where $\varphi(n)$ is Euler's totient function.
\smallskip 

\noindent
(2) The surface $\Gamma_n\bs \H$ has 
 \begin{align*}
  c_n &= \frac{1}{2}n^2\prod_{\small{\begin{array}{cc} p \mbox{ prime }\\ p| n \end{array}}} \left(1-\frac{1}{p^2}\right) = \frac{1}{2}n\varphi(n)\prod_{\small{\begin{array}{cc} p \mbox{ prime }\\ p| n \end{array}}} \left(1+\frac{1}{p}\right)
 \end{align*}
cusps. 
\medskip 

Notice that $\Gamma_n$ is a subgroup of $\Gamma_m$ if and only if $m$ divides $n$.
In order to have a tower of coverings, consider $S_n=\Gamma_{n!}$ and
$$
\pi_n:S_{n+1}\to S_{n}.
$$

Let $\X$ be its inverse limit, and $\Y$ the unit tangent bundle of $\X$. Condition (2) says that $c_n\to \infty$, so $\Y$ has infinitely many (in fact, uncountably many) minimal sets over each closed horocycle of $S_1$. Together, Conditions (1) and (2) imply that, under the projection $\Pi_n=\pi_{n-1}\circ\cdots\circ \pi_1:S_n\to S_1$, a closed horocycle in $S_n$ projects onto a closed horocycle in $S_1$ in a $n!$-to-1 way. Equivalently, a closed horocycle in $S_{n+1}$ projects onto a closed horocycle in $S_n$ in a $(n+1)$-to-1 way. Therefore, the minimal sets for the horocycle flow in $\Y$ are 1-dimensional solenoidal manifolds.
\end{example}

Finally, to complete the description of three different classes provided by Theorem~\ref{thm:dynamics_over_the_cusps}, we will add two remarks: 
\medskip

\noindent 
(I) There are no McCord solenoidal surfaces of class (1) based on hyperbolic surfaces with one single cusp:

\begin{proposition}\label{proposition:no_regular_covering_with_one_cusp}
Consider a tower of hyperbolic surfaces of finite area whose base $S$ has exactly one cusp. With the notations of Theorem~\ref{thm:dynamics_over_the_cusps}, 
if in addition $\Gamma_{n+1}\lhd \Gamma_n$ for all $n$, that is, if $\X$ is McCord, 
it always has more than one minimal set over any closed horocycle.
\end{proposition}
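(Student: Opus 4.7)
The plan is to rule out class (1) of the trichotomy in Theorem~\ref{thm:dynamics_over_the_cusps} by combining the McCord normality hypothesis with a homological obstruction carried by the cusp word. Write $\Gamma = \pi_1(S) \subset \PSL$, let $\xi \in \partial\H$ be a parabolic fixed point representing the unique cusp of $S$, and let $\Gamma_\xi \cong \Z$ be its stabilizer. Since all parabolic fixed points of $\Gamma$ form the single orbit $\Gamma\xi$, the cusps of $S_n = \Gamma_n \bs \H$ biject with the double coset space $\Gamma_n \bs \Gamma / \Gamma_\xi$. Under the McCord assumption $\Gamma_n \lhd \Gamma$, the subset $\Gamma_n \Gamma_\xi$ is a subgroup of $\Gamma$, giving
$$c_n \;=\; [\Gamma : \Gamma_n \Gamma_\xi] \;=\; \frac{|\Gamma/\Gamma_n|}{|\Gamma_\xi \Gamma_n / \Gamma_n|}.$$

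First I observe that $c_n = 1$ forces $\Gamma/\Gamma_n$ to be a cyclic quotient of $\Gamma$: the image of $\Gamma_\xi$ must surject onto $\Gamma/\Gamma_n$, and $\Gamma_\xi$ is cyclic. In particular, $\Gamma/\Gamma_n$ is abelian, so $\Gamma_n \supset [\Gamma, \Gamma]$. Now I invoke the single cusp hypothesis: a non-compact hyperbolic surface of finite area with exactly one cusp must be $S_{g,1}$ with $g \geq 1$, and its fundamental group is free on generators $\alpha_1, \beta_1, \ldots, \alpha_g, \beta_g$ with the cusp word $\prod_{i=1}^g [\alpha_i, \beta_i]$ generating a conjugate of $\Gamma_\xi$. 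This cusp word lies in $[\Gamma, \Gamma]$, so the image of $\Gamma_\xi$ in the abelian quotient $\Gamma/\Gamma_n$ is trivial. Combined with the surjectivity observation, $\Gamma/\Gamma_n = 1$, i.e., $\Gamma_n = \Gamma$.

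Finally, since $\X$ is a genuine solenoid the tower cannot stabilize, so $\Gamma_n \subsetneq \Gamma$ for all sufficiently large $n$; for such $n$ the previous step yields $c_n \geq 2$, and Theorem~\ref{thm:dynamics_over_the_cusps} supplies at least two minimal sets above any closed horocyclic orbit in $T^1 S$. The central point, and really the only subtle step, is the identification $c_n = [\Gamma : \Gamma_n \Gamma_\xi]$ together with the passage from a double coset to an honest index via normality of $\Gamma_n$; everything else rests on the classical fact that the boundary curve of $S_{g,1}$ is null-homologous, which is precisely what fails as soon as there is more than one cusp.
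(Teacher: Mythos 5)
Your proposal is correct and takes essentially the same route as the paper: normality forces the image of the cusp stabilizer to generate the (cyclic, hence abelian) quotient $\Gamma/\Gamma_n$, while the cusp word $\prod_{i=1}^g[\alpha_i,\beta_i]$ is a product of commutators and therefore dies in any abelian quotient, forcing $\Gamma_n=\Gamma$. Your double-coset count $c_n=[\Gamma:\Gamma_n\Gamma_\xi]$ is just a more systematic bookkeeping of the paper's observation that the image of the cusp word would have order $[\Gamma:\Gamma_n]$ in a group of that same order; the paper applies this only to the first covering $\Gamma_1\lhd\Gamma_0$ (which already suffices, since $c_n$ is non-decreasing), which also sidesteps your implicit use of $\Gamma_n\lhd\Gamma$ for all $n$ rather than the stated chain condition $\Gamma_{n+1}\lhd\Gamma_n$.
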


\begin{proof}
 It is enough to show that the surface $S=S_{g,1}=\Gamma_0\bs\H$ of genus $g$ and one cusp does not have a normal cover with one cusp. Suppose that it does, that is, that there is a subgroup $\Gamma_1\lhd \Gamma_0$ of finite index $m$ such that $S'=\Gamma_1\bs \H$ has one cusp.
 
 The standard generators for $\Gamma_0\simeq \pi_1(S)$ are $\alpha_1,\beta_1,\ldots,\alpha_g,\beta_g$. The group $\Gamma_0$ is free in these generators, and the product of commutators $\gamma=\prod_{i=1}^g[\alpha_i,\beta_i]$ represents a curve that turns once around the cusp. 
 
 Since $S_1$ has only one cusp, it has to project $m$-to-1 onto the cusp in $S$. This means that, in the quotient $\Gamma_1 \bs \Gamma_0$, the element $\Gamma_1 \gamma$ has order $m$. Since $m$ is exactly the order of $\Gamma_1 \bs \Gamma_0$, this quotient group is the cyclic group generated by $\Gamma_1 \gamma$, and it is therefore Abelian. But since $\gamma$ is a product of commutators, it has to project trivially to any Abelian quotient of $\Gamma_0$, which is a contradiction.
\end{proof}

\noindent
(II) On the contrary, according to \cite{Edmonds1984}, the only obstruction to the existence of a non-regular covering of hyperbolic punctured surfaces is provided by the Riemann-Hurwitz formula. In particular, there is a tower of 3-to-1 coverings $\pi_n:S_{n+1}\to S_n$, where $S_0=S_{1,1}$ and $S_n$ has genus $(3^n+1)/2$ and only one cusp for all $n$. The inverse limit
$$\X=\varprojlim \, \big(S_{n+1} \overset{\pi_n}{\longrightarrow} S_n \big)$$
is a solenoidal surface based on the once-punctured surface $S = S_{1,1}$ with exactly one minimal set over any closed horocycle of $S$, but it is not McCord by Proposition~\ref{proposition:no_regular_covering_with_one_cusp}.
Thus all possible situations described in Theorem~\ref{thm:dynamics_over_the_cusps} are realizable by inverse limits of finite coverings.
\medskip 

Combining both remarks we deduce that the solenoidal surface constructed in  Example~\ref{example:suspension_with_triadic_solenoid_over_closed_horocycle}   can not be McCord, but it is the inverse limit of a tower of non-regular 3-to-1 coverings. 

\section{Minimal sets over closed horocycles with small and large period} \label{Slargeperiods}

For any hyperbolic solenoidal surface $\X$, minimal sets over closed horocycles arise in one-parameter families, since if $\mathcal{S}$ is a minimal set over a closed horocycle, so is $g_t(\mathcal{S})$ for every $t\in \R$.

In this section we will begin by describing the set of these families of minimal sets over cusps, which amounts to describing the set of ends of $\Y$. Later, we will see how each family moves inside $\Y$ as $t\to\pm\infty$ assuming $\Y$ is McCord. 

This behaviour is exactly what is observed in non-compact hyperbolic surfaces of finite area. As we already recalled in the introduction, the phenomenon is actually stronger. According to a theorem by P. Sarnak \cite{Sarnak1981}, if $\nu$ is an ergodic probability measure in $T^1S$ which is supported in a closed horocyclic orbit, then the weak* limit $\lim_{t\to -\infty} (g_t)_*\nu$ exists and it is equal to the Liouville measure in $T^1S$ (up to scaling). This implies that, in the end compactification $\widehat{T^1S}$ of $T^1S$, closed horocycles converge to the whole $\widehat{T^1S}$ when pushed by the geodesic flow for negative times.

\subsection{Cuspidal ends of $\Y$}

A hyperbolic surface of finite area $S$ --as well as its unit tangent bundle $T^1S$-- admits a natural compactification, where each cusp is compactified with one point. It is nothing but the \emph{end compactification} (in the sense of H. Freudenthal) $\widehat S$ of $S$ --and similarly $\widehat{T^1S}$ of $T^1S$--. Namely, for any increasing sequence of compact subsets 
$$
K_1 \subset K_2  \subset  K_n \subset K_{n+1} \subset \dots
$$ 
whose interiors cover $S$, each decreasing sequence 
$$
U_1 \supset U_2 \supset \dots U_n \supset U_{n+1} \supset \dots $$  
of connected components $U_n$ of the complements $X - K_n$ becomes a fundamental system of neighbourhoods of an end. As $S$ --and also $T^1S$-- is locally compact and $\sigma$-compact, we can equivalently define the end compactification of $S$ as follows: for any direct family of compact subsets $K$ and inclusion maps $K \hookrightarrow K'$ with direct limit $S$, the space of ends is the inverse limit of the family of groups $\pi_0(S-K)$ and homomorphisms $\pi_0(S -K') \to \pi_0(S-K)$ induced by the inclusions $S-K' \hookrightarrow X-K$. In fact, as $S$ and $T^1S$ are manifolds, we can replace each compact $K_n$ by a compact submanifold with boundary and each connected component $U_n$ by an unbounded connected submanifold. 

We will see that something similar is true for $\Y$.
To this effect, we will consider the tower of coverings 
$$\cdots \longrightarrow S_{n+1} \overset{\pi_n}{\longrightarrow} S_n \longrightarrow \cdots \longrightarrow S=S_0$$
and the corresponding tower for the unit tangent bundles
$$\cdots \longrightarrow \Gamma_{n+1}\backslash \PSL \overset{\pi_n}{\longrightarrow} \Gamma_n \backslash \PSL \longrightarrow \cdots \longrightarrow \Gamma \backslash \PSL.$$
We are (again!) abusing notation by calling $\pi_n$ both the surface coverings and its tangent maps. 
In this subsection we are not assuming that the subgroups $\Gamma_n$ are normal --that is, that the inverse limit is McCord-- since this is not necessary for the main result, Proposition~\ref{proposition:compactification} below. 

Each surface $S_n$, as well as its unit tangent bundle $T^1S_n=\Gamma_n\backslash \PSL$, can be compactified using a finite number of points, one for each cusp. Let $\widehat{S_n}$ and $\widehat{T^1S_n}$ be these compactifications. Then the above coverings extend to ramified coverings 
$$\widehat \pi_n :\widehat{S_{n+1}}\to \widehat{S_n} \quad \text{and} \quad  \widehat \pi_n :\widehat{T^1S_{n+1}}\to \widehat{T^1S_n}.$$
The inverse limit
$$\widehat{\Y}=\varprojlim (\widehat{T^1S_{n+1}}\overset{\widehat \pi_n}{\longrightarrow} \widehat{T^1S_n})$$
is a compact space containing $\Y$ as a dense subset.

This compactification can be understood as follows. Consider the infinite sequences
$$(\xi_0, \xi_1,\ldots)\in \prod_{n=0}^\infty \widehat{T^1S_n}-T^1S_n$$
such that $\widehat \pi_n(\xi_{n+1})=\xi_n$. Each sequence corresponds to a choice of one cusp in each $S_n$, each of which projects onto the previous one under the covering map. These sequences are exactly the points in $\widehat \Y - \Y$. They can be identified with the boundary at infinity of a forest $\mathcal{T}$ that has vertices 
$$V=\bigsqcup_{n=0}^\infty \widehat{T^1S_n}-T^1S_n$$
and edges joining each $\widehat{T^1S_n}-T^1S_n$ with $\widehat{T^1S_{n+1}}-T^1S_{n+1}$ in such a way that there is an edge between $\xi_n$ and $\xi_{n+1}$ if and only if $\widehat \pi_n(\xi_{n+1})=\xi_n$. This forest can be replaced by a tree by adding a single vertex at level $n=-1$ and edges from this vertex to the vertices in $\widehat{T^1S}-T^1S$. When we say that $\widehat{\Y} -\Y$ identifies with $\partial \mathcal{T}$, what we mean is that the natural bijection between these two sets is a homeomorphism, if we consider $\widehat{\Y} - \Y$ as a topological subspace of $\widehat{\Y}$.

\begin{proposition} \label{proposition:decomposition}
The boundary at infinity $\widehat{\Y} - \Y$ of $\Y$ is a $0$-dimensional space that admits a natural decomposition as finite disjoint union of finite sets and Cantor sets.
\end{proposition} 

\begin{proof}  
Consider a cusp $\xi$ in $S$. It gives rise to a rooted tree $\T_\xi \subset \T$, as explained before. 
%If $\X$ is McCord, the groups $\Gamma_n$ are normal in $\Gamma$, so for each $n \in \N$ all vertices of the tree $\T_\xi$ at distance $n$ from the root have the same degree. 
%This implies that the boundary at infinity of $\T_\xi$, that is a closed subset of $\widehat{\Y} - \Y$, is either finite or a Cantor set.
%Since $S$ has finitely many cusps, the boundary $\widehat{\Y} - \Y$ is a finite disjoint union of finite sets and Cantor sets.
For each $n \in \N$, all vertices of the tree $\T_\xi$ at distance $n$ from the root have finite degree. 
This implies that the boundary at infinity of $\T_\xi$, that is a closed subset of $\widehat{\Y} - \Y$, decomposes as finite  disjoint union of finite sets or Cantor sets.
Since $S$ has finitely many cusps, the same happens for the boundary $\widehat{\Y} - \Y$.
\end{proof}

Informally, we could say that the compactification of $\Y$ is obtained by adding one point at infinity for each choice of pre-orbit of a cusp of $S$ in the tower of coverings. More formally,  the forest $\mathcal{T}$ can be interpreted as the union of the Bratteli diagrams (see \cite{GPS1995} for a definition) corresponding to the odometers $(F_n,T_n,q_n)_{n \in \N}$ associated with the cusps $\xi_0$ of $S_0$ according to the construction used in the proof of Theorem~\ref{thm:dynamics_over_the_cusps}. The same idea is applied to the boundary at infinity $\widehat{\Y} - \Y$ of the unit tangent bundle $\Y$ to prove the following result: 

\begin{proposition}\label{proposition:compactification}
Each point of $\widehat{\Y}-\Y$ corresponds to  a minimal set over a closed horocycle of period 1 or equivalently to a one-parameter family of minimal sets over a cusp.
\end{proposition}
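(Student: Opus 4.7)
The plan is to exhibit an explicit bijection between $\widehat{\Y}-\Y$ and the set of minimal sets of the horocycle flow on $\Y$ that project onto a closed horocyclic orbit of period $1$ in $T^1S$. Recall from the discussion preceding the statement that a point $\eta \in \widehat{\Y}-\Y$ is, by construction, a coherent sequence $(\xi_n)_{n\geq 0}$ where each $\xi_n$ is a cusp of $\widehat{T^1 S_n}$ and $\widehat{\pi}_n(\xi_{n+1})=\xi_n$. In particular $\xi_0$ is a cusp of $S$; let $x \in T^1S$ be the vector whose orbit $h_{\R}(x)$ is the closed horocycle of period $1$ around $\xi_0$, and similarly let $x_{\xi_n}\in T^1S_n$ be the analogous vector around $\xi_n$. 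Let $p_n : \Y \to T^1S_n$ denote the canonical projection. I would associate to $\eta$ the compact $h_{\R}$-invariant set
$$\M_\eta \;=\; \bigcap_{n\geq 0}\, p_n^{-1}\bigl(h_{\R}(x_{\xi_n})\bigr)\;\subset\;\M := \pi^{-1}\bigl(h_{\R}(x)\bigr).$$

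Unpacking the proof of Theorem~\ref{thm:dynamics_over_the_cusps}, for each $n$ the set $\M$ decomposes as the disjoint union of the closed $h_{\R}$-invariant pieces $p_n^{-1}(h_{\R}(x_{\xi}))$ indexed by the cusps $\xi$ of $S_n$ above $\xi_0$, and these partitions refine as $n$ grows along the ramified coverings $\widehat{\pi}_n$. I would then check minimality of $\M_\eta$ directly: if $\M'\subset\M_\eta$ is any non-empty closed $h_{\R}$-invariant subset, then $p_n(\M')$ is a non-empty closed $h_{\R}$-invariant subset of the periodic orbit $h_{\R}(x_{\xi_n})$, hence equal to the whole circle. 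Thus $\M'$ surjects onto every level-$n$ circle in the inverse system defining $\M_\eta$, so $\M'=\M_\eta$. Conversely, any minimal set contained in $\M$ is, by the partition property and by minimality (its intersection with each piece is closed and invariant, hence empty or everything), contained in exactly one piece at each level $n$; the resulting coherent sequence is an endpoint $\eta$, and by the minimality just shown the given minimal set coincides with $\M_\eta$. Letting $\eta$ range over $\widehat{\Y}-\Y$ as $\xi_0$ runs over the finitely many cusps of $S$ yields the desired bijection.

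The equivalent one-parameter formulation then follows by pushing $\M_\eta$ with the geodesic flow: the family $\{g_t(\M_\eta):t\in\R\}$ consists of minimal sets projecting onto the one-parameter family of closed horocycles around $\xi_0$, all attached to the same endpoint $\eta$. The main technical point of the plan is the minimality of $\M_\eta$; this reduces to the elementary observation that a closed $h_{\R}$-invariant subset of a periodic orbit is either empty or the full orbit, combined with the inverse-limit description of $\M_\eta$ as the solenoidal (possibly circular) set built from the tower of circle coverings $h_{\R}(x_{\xi_{n+1}})\to h_{\R}(x_{\xi_n})$, whose degrees are the ramification indices of $\widehat{\pi}_n$ at $\xi_{n+1}$. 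Once this is granted, the bijection between $\widehat{\Y}-\Y$ and the minimal sets over closed horocycles of period $1$ is immediate from the identification of $\widehat{\Y}-\Y$ with the boundary of the forest $\mathcal{T}$.
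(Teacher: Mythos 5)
Your construction is correct, and it reorganizes the paper's argument rather than reproducing it. The paper fixes a coherent sequence of periodic vectors $x=(x_0,x_1,\ldots)$ over the chosen end, takes the orbit closure $\M_x=\overline{h_{\R}(x)}$, and proves minimality by exhibiting $\M_x$ as the suspension of a minimal odometer built from the finite sets $F_n=\{h_k(x_n):k\in\Z\}$; you instead define the candidate set intrinsically, as the nested intersection $\bigcap_n p_n^{-1}\bigl(h_{\R}(x_{\xi_n})\bigr)$, that is, the inverse limit of the lifted circles, and verify minimality by the direct argument that a nonempty closed invariant subset projects onto every level-$n$ circle. Your route avoids choosing a base point and the suspension/odometer language, and the converse direction via the refining level-$n$ partitions matches the paper's use of the structure from Theorem~\ref{thm:dynamics_over_the_cusps}; the paper's route has the advantage of making the transverse odometer explicit, which ties in with the Bratteli-diagram description of the forest $\mathcal{T}$ and with the holonomy remark that follows the proposition. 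Two small points to tighten: first, the phrase ``the analogous vector around $\xi_n$'' should say explicitly that $h_{\R}(x_{\xi_n})$ is the closed horocyclic orbit in $T^1S_n$ lying over $h_{\R}(x)$ and attached to the cusp $\xi_n$ (its period is the ramification degree of $\Pi_n$ at $\xi_n$, not $1$), which is clearly what you intend given your later reference to the covering degrees; second, the step ``$\M'$ surjects onto every level-$n$ circle, so $\M'=\M_\eta$'' deserves its one-line justification, namely that basic open sets of the inverse limit are preimages $p_n^{-1}(V)$ of open sets at finite levels, so surjectivity at every level makes $\M'$ dense in $\M_\eta$, and a closed dense subset is everything. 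Neither point affects the validity of the argument.
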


\begin{proof} Let $\xi = (\xi_0,\xi_1,\ldots)\in \widehat{\Y}-\Y$.

Consider a sequence $x=(x_0, x_1,\ldots) \in \Y\subset \prod_{n=0}^\infty T^1S_n$ such that:
\begin{itemize}
 \item [\textbullet] $x_0\in T^1S$ is a periodic point of period 1 for the horocycle flow.
 \item [\textbullet] For all $n$, $\pi_n(x_{n+1})=x_n$.
 \item [\textbullet] For all $n$, the closed horocyclic orbit through $x_n$ in $T^1S_n$ is freely homotopic to the point $\xi_n$ in $\widehat{T^1S_n}$. 
\end{itemize}

We will refer to the last condition by saying that ``$x_n$ corresponds to the cusp $\xi_n$''. Notice that, given such a sequence $x$, the sequence $\xi$ can be recovered. Therefore, we can write $\xi =\xi (x)$. The converse is not true, since for a given $\xi$ there are infinitely many choices of $x$.

Let $\mathcal{M}_x=\overline{h_{\R}(x)}$. Any minimal set over a cusp must be of this type, since it is the closure of some horocyclic orbit. We will prove that $\mathcal{M}_x$ is minimal, and that $\mathcal{M}_{x'}=\mathcal{M}_x$ if and only if $\xi(x)=\xi(x')$.

To see that $\mathcal{M}_x$ is minimal, we will see that it is the suspension of a minimal odometer.  Although it is not exactly the odometer described in the proof of  Theorem~\ref{thm:dynamics_over_the_cusps}, we will keep the same notation. For every $n$, let 
$$F_n=\{\, h_k(x_n)\, : \, k\in\Z \, \},$$
that is, $F_n$ is the set of images of $x_n$ under integer times of the horocycle flow in $T^1S_n$. It is a finite set, since $x_n$ is periodic for $h_s$ of integer period. Let $T_n:F_n\to F_n$ be given by $T_n(x)=h_1(x)$. It is a permutation of $F_n$ which acts transitively. Let $q_n:F_{n+1}\to F_n$ be the restriction of $\pi_n$ to $F_{n+1}$. Then $T_n\circ q_n=q_n\circ T_{n+1}$. The sequence $(F_n,T_n,q_n)_{n\in \N}$ induces a minimal transformation $T$ on a closed subset $K_x$ of the Cantor set $\pi^{-1}(x_0)$. Then $K_x$ is invariant under the time-one map $h_1$ of the horocycle flow, which coincides with $T$. Saturating $K_x$ by the horocycle flow gives the suspension of $T$, so the horocycle flow in $\mathcal{M}_x$ is the suspension of $T$ --and it is minimal. Notice that if the cardinality of the $F_n$ remains bounded, $\mathcal{M}_x$ is a closed horocyclic orbit, and if the cardinality of the $F_n$ goes to infinity $\mathcal{M}_x$ is a minimal one-dimensional solenoid.
 
 If $x'\in \Y$ is another sequence such that $\xi(x)=\xi(x')$ with $x_0=x'_0$, then in fact $x'_n\in F_n$ for all $n$.  If $\xi(x)=\xi(x')$ and $x_0\neq x'_0$, there exists an $s\in(0,1)$ such that $x'_0=h_s(x_0)$. In any case, $\mathcal{M}_{x'}=\mathcal{M}_x$.
 
 If $\xi,\ \xi'\in \widehat{\Y}-\Y$ are different, this means that there exists an $n$ such that $\xi_n\neq \xi'_n$. Consider $x,x'\in \Y$ as above which correspond to $\xi$ and $\xi'$, respectively. Then $\pi^{-1}(h_{\R}(x_n))$ and $\pi^{-1}(h_{\R}(x'_n))$
are two compact disjoint invariant sets for the horocycle flow in $\Y$ such that $\mathcal{M}_x\subset \pi^{-1}(h_{\R}(x_n))$ and $\mathcal{M}_{x'}\subset \pi^{-1}(h_{\R}(x'_n))$. This proves that $\mathcal{M}_x\neq \mathcal{M}_{x'}$.
\end{proof}

Let $\mathcal{M}\subset\Y$ be a minimal set over a closed horocycle of period 1, and $\mathcal{M}_{\X}$ be its projection to $\X$.
From the proof of Proposition~\ref{proposition:compactification}, we see that $\mathcal{M}_{\X}$ bounds a connected solenoidal manifold that projects over the neighbourhood of the end of $S$ bounded by the closed horocycle in $S$. We deduce that $\widehat{\X}$ and $\widehat{\Y}$ coincide with the end compactifications of $\X$ and  $\Y$ respectively, and $\widehat{\Y} - \Y$ is the space of ends of $\Y$. This justifies the following definition: 

\begin{definition}
A point of $\widehat{\Y}-\Y$ is called a \emph{cuspidal end} of $\Y$.
\end{definition}

\begin{remark}
If $x\in \Y$ projects onto a periodic point of the horocycle flow on $T^1S$, we have just proved that its orbit closure $\M_x$ is a minimal set for the horocycle flow. In particular, for any $x_0\in T^1S$ which is periodic for the horocycle flow on $S$, the set of periodic orbits for the horocycle flow in $\M=\pi^{-1}(h_{\R}(x_0))$ is a clopen subset of $\M$ according to Proposition~\ref{proposition:decomposition}. This implies that any closed horocyclic orbit must have trivial holonomy.
Recall that Example \ref{example:dynamics_over_cusp_can_be_anything} can be chosen so that it has closed horocyclic orbits with non-trivial holonomy, by taking an homeomorphism $T:K\to K$ with periodic points, but for which periodic points do not form a clopen set. This cannot happen in the inverse limit of a tower of coverings.
\end{remark}

\subsection{A convergence theorem for McCord solenoidal surfaces}
Now let us assume that the minimal hyperbolic solenoidal surface of finite type $\X$ is McCord. When we describe it as a limit of a tower of coverings, this means that the subgroups $\Gamma_n$ are normal in $\Gamma$. As before, $\Y=T^1\X$. 

Theorem~\ref{thm:principal} tells us that horocyclic orbits either project onto closed horocyclic orbits in $T^1S$ or are dense. This means that any closed minimal set for the horocycle flow in $\Y$ is a minimal set over a closed horocycle, and in particular it is compact. Minimal sets are therefore grouped into one-parameter families, each corresponding to a end in $\widehat{\Y}-\Y$. 

Given a minimal set $\M$ for the horocycle flow in $\Y$, we will now discuss what happens with its one-parameter family $g_t(\M)$ when $t$ goes to $+\infty$ and to $-\infty$. We will prove that when $t\to +\infty$ $g_t(\M)$ goes to a cuspidal point, and when $t\to -\infty$ $g_t(\M)$ converges to ``the whole of $\Y$'', in an appropriate sense. 

Let us recall the precise statement of this result given in the introduction. The geodesic and horocycle flows in $\Y$ extend continuously to $\widehat \Y$ by leaving the cuspidal ends fixed. Consider a distance $d$ which induces the topology on $\widehat \Y$, and the corresponding Hausdorff distance on the compact subsets of $\widehat \Y$. 

\begin{theorem}\label{thm:convergence_in_Hausdorff_distance}
 Let $\X$ be a McCord hyperbolic solenoidal surface of finite type, $\Y=T^1\X$ its unit tangent bundle and $\widehat \Y$  the end compactification of $\Y$. If $\M\subset \Y$ is a minimal set for the horocycle flow, then
\smallskip

\noindent
(i) $\lim_{t\to +\infty}g_t(\M)$ is a point in $\widehat{\Y}-\Y$,
\smallskip

\noindent
(ii) $\lim_{t\to -\infty}g_t(\M)=\widehat{\Y}$.
 \smallskip

\noindent
These limits are taken in the Hausdorff distance.
\end{theorem}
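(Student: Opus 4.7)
For part (i) the argument is essentially structural and uses Proposition~\ref{proposition:compactification}. That result associates to $\M$ a unique cuspidal end $\xi=(\xi_0,\xi_1,\dots)\in\widehat{\Y}-\Y$, where each $\xi_n$ is a cusp of $S_n$. Since the geodesic flow commutes with the tower projections, sends $h_\R$-minimal sets to $h_\R$-minimal sets, and fixes each $\xi_n$, the minimal set $g_t(\M)$ is again associated with the same cuspidal end $\xi$. A neighbourhood basis of $\xi$ in $\widehat{\Y}$ is given by the preimages under the natural projections $\widehat{\Y}\to\widehat{T^1S_n}$ of cuspidal neighbourhoods $U_n$ of $\xi_n$. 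At each level $n$, the projection of $\M$ to $T^1S_n$ is a single closed horocycle around $\xi_n$ of period $L_n$, so the projection of $g_t(\M)$ is a closed horocycle around $\xi_n$ of period $L_n e^{-t}$, which lies inside any fixed $U_n$ for $t$ sufficiently large. Hence $g_t(\M)\to\{\xi\}$ in Hausdorff distance as $t\to+\infty$.

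For part (ii) I would argue by contradiction using compactness of the hyperspace of closed subsets of $\widehat{\Y}$. Suppose there exist $\epsilon>0$ and $t_n\to-\infty$ with $d_H(g_{t_n}(\M),\widehat{\Y})\geq\epsilon$. Extracting a subsequence, $g_{t_n}(\M)$ converges in Hausdorff distance to a proper closed subset $F\subsetneq\widehat{\Y}$. Since the horocycle flow extends continuously to $\widehat{\Y}$ (fixing the cuspidal ends) and each $g_{t_n}(\M)$ is $h_\R$-invariant, $F$ is closed and $h_\R$-invariant. The key intermediate step is to prove that $\hat\pi(F)=\widehat{T^1S}$, where $\hat\pi:\widehat{\Y}\to\widehat{T^1S}$ denotes the continuous extension of the base projection.

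The projections $\hat\pi(g_{t_n}(\M))=g_{t_n}(H_0)$, where $H_0=\pi(\M)$, are closed horocyclic orbits of period $L_0e^{-t_n}\to\infty$. By Sarnak's theorem~\cite{Sarnak1981}, the normalised probability measure $\nu_t$ supported on $g_t(H_0)$ converges weakly, as $t\to-\infty$, to the Liouville probability measure $\lambda$ on $T^1S$. Extending $\nu_t$ and $\lambda$ by zero on the finitely many cusps of $\widehat{T^1S}$, weak convergence persists on the compact space $\widehat{T^1S}$. Covering $\widehat{T^1S}$ by finitely many open balls of radius $\epsilon/2$ with $\lambda$-null boundary, the portmanteau theorem gives $\nu_t(B)>0$ on every ball of the cover for $t$ sufficiently negative, so $g_t(H_0)$ is $\epsilon$-dense in $\widehat{T^1S}$. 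Since Hausdorff limits pass through continuous maps on compact spaces, $\hat\pi(F)=\widehat{T^1S}$, and in particular $\hat\pi(F\cap\Y)=T^1S$.

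Finally, the set of $h_\R$-periodic points of $T^1S$ is a proper subset, being a finite union of two-dimensional cylinders (one per cusp) in the three-dimensional manifold $T^1S$. From $\hat\pi(F\cap\Y)=T^1S$ there must exist $z\in F\cap\Y$ whose projection is not $h_\R$-periodic, and Theorem~\ref{thm:principal} then yields $\overline{h_\R(z)}=\Y$. Since $F$ is closed and $h_\R$-invariant, $F\supseteq\overline{h_\R(z)}=\Y$, and taking closures in $\widehat{\Y}$ gives $F=\widehat{\Y}$, contradicting $F\subsetneq\widehat{\Y}$. The most delicate step I expect is the upgrade from Sarnak's weak* convergence on $T^1S$ to Hausdorff convergence of supports in the end compactification $\widehat{T^1S}$; this requires checking that the zero extension to cusps preserves weak convergence and then covering $\widehat{T^1S}$ uniformly by $\lambda$-continuity sets that include cuspidal neighbourhoods, so that the portmanteau theorem applies simultaneously to every element of a fixed finite cover.
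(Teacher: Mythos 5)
Your proposal is correct, and for part (ii) it follows a genuinely different route from the paper. Part (i) is essentially the paper's (the paper treats it as immediate from the construction of $\widehat{\Y}$; your level-by-level argument with cuspidal neighbourhoods is a fleshed-out version of the same observation). For part (ii), the paper proves a quantitative statement first (Lemma~\ref{lemma:epsilon_densidad_en_K}): working with a natural metric on the non-compact $\Y$, it shows $g_t(\M)$ is $\varepsilon$-dense in every $\pi^{-1}(K)$ for all $t<T$, via a shadowing argument inside a single leaf $\PSL\times\{g\}$ -- a long horocycle segment dense in $\Y$ (from Theorem~\ref{thm:principal}), Sarnak-type $\delta$-density of the projected long closed horocycle in the base, and transfer of density using the $\Gamma$- and $G$-equivariance of the natural distance (Lemmas~\ref{lemma:K_es_invariante_por_Gamma_y_por_G} and \ref{lemma:cambio_de_horociclo_denso}), plus a change-of-metric lemma to pass to $\widehat{\Y}$. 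You instead argue softly: by compactness of the hyperspace of $\widehat{\Y}$, any subsequential Hausdorff limit $F$ of $g_{t_n}(\M)$ is a closed $h_\R$-invariant set; Sarnak's equidistribution of long closed horocycles on the base (upgraded to the compactified base exactly as in the paper's Remark) forces $\hat\pi(F)=\widehat{T^1S}$, hence $F$ contains a point of $\Y$ over a non-periodic base vector, and Theorem~\ref{thm:principal} then gives $F\supseteq\Y$, so $F=\widehat{\Y}$, a contradiction. Both proofs rest on the same two pillars (Sarnak on the base and the solenoidal Hedlund theorem, hence the McCord hypothesis), but your mechanism for lifting base equidistribution to the solenoid is "one dense orbit in the limit set" rather than uniform shadowing; this makes your argument shorter and free of the natural-distance and equivariance lemmas, at the cost of being non-quantitative: the paper's Lemma~\ref{lemma:epsilon_densidad_en_K} delivers uniform $\varepsilon$-density of $g_t(\M)$ in preimages of compact sets for all $t<T$, which is finer information than Hausdorff convergence in $\widehat{\Y}$ and of independent interest. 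The only steps of yours needing care -- continuity of the extended flows on $\widehat{\Y}$ and of $\hat\pi$, and the fact that Sarnak's convergence holds against bounded continuous test functions so that the zero extension to the finitely many cusp points converges on $\widehat{T^1S}$ -- are all available in (or argued exactly as in) the paper, so I see no gap.
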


From the construction of $\hat \Y$, it is clear that the first condition remains valid for any minimal set over a closed horocyclic orbit in the unit tangent bundle of any inverse limit of surface coverings. We will prove the second condition of  Theorem~\ref{thm:convergence_in_Hausdorff_distance} in several steps.
\medskip

There is a natural distance in the non-compact solenoidal manifold $\Y$, which does not extend to the compactification. 

\begin{definition}
 Let $d'$ be a distance in $\PSL$ coming from a left-invariant metric, and $d''$ a  bi-invariant distance in $G$. They define a distance $d$ in $\PSL \times G$ by 
$$d((u_0,g_0),(u_1,g_1))=d'(u_0,u_1)+d''(g_0,g_1).$$ 
The distance $\mathbf{d}$ in $\Y$ given by 
 $$
\mathbf{d}(x,y)=\min\{\, d((u_0,g_0),(u_1,g_1)) \, : \, \pi(u_0,g_0)=x,\, \pi(u_1,g_1)=y \,\}
$$ 
is called a \emph{natural distance}. It is not unique because it depends on the choices of $d'$ and $d''$.
\end{definition}

This distance has many good properties, for example that the inclusion of a leaf $T^1L\hookrightarrow \Y$ and the projection $\Pi_{\Gamma}:\PSL\times G\to \Y$ are both 1-Lipschitz. As $(\Y,\mathbf{d})$ has infinite diameter, we will consider compact subsets of $\Y$.

Recall that if $(X,d)$ is a metric space, $A\subset X$ and $\varepsilon>0$, $A$ is $\varepsilon$-dense if $\forall x\in X\ \ \exists y\in A$ such that $d(x,y)<\varepsilon$. More generally, we have the following definition:

\begin{definition}
 Let $(X,d)$ be a metric space and $A, K\subset X$. For $\varepsilon>0$ we say that $A$ is \emph{$\varepsilon$-dense in $K$} if $\forall x\in K\ \ \exists y\in A$ such that $d(x,y)<\varepsilon$.
\end{definition}

\begin{remark}
 This is not equivalent to saying that $A\cap K$ is $\varepsilon$-dense as a topological subspace of $K$. For example, if $A$ is dense in $X$ but $A\neq X$, $A$ is $\varepsilon$-dense in $X-A$ for every $\varepsilon$, but $A\cap (X-A)=\emptyset$. 
\end{remark}

The next Lemma is a version of Theorem~\ref{thm:convergence_in_Hausdorff_distance} stated in the non-compact space $\Y$. It is useful because it allows us to work with a natural distance $\mathbf{d}$ in $\Y$. It is the main result in this section, in the sense that Theorem~\ref{thm:convergence_in_Hausdorff_distance} follows easily from it.

\begin{lemma}\label{lemma:epsilon_densidad_en_K}
 Let $\Y$ be a minimal McCord solenoid obtained as inverse limit 
 $$
\Y=\varprojlim \big( \Gamma_{n+1}\backslash \PSL \longrightarrow \Gamma_n \backslash \PSL\big),
$$
endowed with the natural distance $\mathbf{d}$. (We still use the notation $\Gamma_0=\Gamma$.)
Then, for any $\varepsilon>0$, any compact subset $K$ of $\Gamma\backslash \PSL$ which contains a ball of radius greater than $\varepsilon$, and any minimal $h_s$-invariant set $\M\subset \Y$, there exists $T\in \R$ such that, for all $t<T$, $g_t(\M)$ is $\varepsilon$-dense in $\K=\pi^{-1}(K)$.
 \end{lemma}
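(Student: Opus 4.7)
The plan is to reduce the problem to a finite level of the tower, invoke a Sarnak-type equidistribution result at that level, and then lift the statement back to $\Y$. If $\M$ does not project onto a closed horocyclic orbit in $T^1S$, then Theorem~\ref{thm:principal} forces $\overline{h_\R(y)}=\Y$ for every $y\in\M$, hence $\M=\Y$, and the conclusion of the lemma holds trivially. From now on, assume $\M$ projects onto a closed horocyclic orbit in $T^1S$.

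Choose a level $n$ large enough so that the open subgroup $\widetilde{\Gamma}_n=\ker(G\to\Gamma/\Gamma_n)$ has $d''$-diameter less than $\varepsilon/2$; this is possible since the subgroups $\widetilde{\Gamma}_n$ form a neighbourhood basis of the identity in the compact group $G$ and $d''$ is compatible with its topology. In the model $\Y=\Gamma\backslash(\PSL\times G)$, a short computation shows that the fibers of $\pi_n:\Y\to\Gamma_n\backslash\PSL$ are of the form $\{\Gamma(u,gh):h\in\widetilde{\Gamma}_n\}$, so by bi-invariance of $d''$ they have $\mathbf{d}$-diameter at most $\varepsilon/2$. Moreover, $\M_n=\pi_n(\M)$ is a single closed horocycle orbit on the non-compact finite area hyperbolic surface $T^1S_n$ (by the description of cuspidal minimal sets given in the proof of Proposition~\ref{proposition:compactification}).

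Let $K_n$ be the preimage of $K$ under the finite covering $\Gamma_n\backslash\PSL\to\Gamma\backslash\PSL$; it is compact. By Sarnak's equidistribution theorem \cite{Sarnak1981} applied to the finite area surface $S_n$, the normalized length measure on $\M_n$ pushed by $g_t$ converges weakly, as $t\to-\infty$, to a scalar multiple of the Liouville measure on $T^1S_n$. Since this limit has full support, covering $K_n$ by finitely many $(\varepsilon/4)$-balls yields $T\in\R$ such that for every $t<T$, the set $g_t(\M_n)$ is $\varepsilon/2$-dense in $K_n$ for the natural distance $d_n$ on $T^1S_n$. Now fix $y\in\K$ and $t<T$, set $y_n=\pi_n(y)\in K_n$, and pick $z_n\in g_t(\M_n)$ with $d_n(y_n,z_n)<\varepsilon/2$. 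The right $\PSL$-action $\Gamma(u,g)\cdot v=\Gamma(uv,g)$ is well-defined, $\pi_n$-equivariant, and satisfies $\mathbf{d}(y,y\cdot v)\leq d'(e,v)$ by left-invariance of $d'$; therefore there exists $v\in\PSL$ with $d'(e,v)<\varepsilon/2$ and $\pi_n(y\cdot v)=z_n$. Since $\pi_n\circ g_t=g_t\circ\pi_n$, we have $\pi_n(g_t(\M))=g_t(\M_n)\ni z_n$, so we can select $z\in g_t(\M)$ with $\pi_n(z)=z_n$. Then $z$ and $y\cdot v$ lie in a common fiber of $\pi_n$, which has $\mathbf{d}$-diameter less than $\varepsilon/2$, so the triangle inequality yields $\mathbf{d}(y,z)<\varepsilon$, as required.

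The main obstacle I anticipate is the uniform topological density at level $n$: namely, extracting from Sarnak's weak-$\ast$ convergence a single constant $T$ that works for every point of the compact $K_n$. The hypothesis that $K$ contain a ball of radius greater than $\varepsilon$ is presumably used here to guarantee that $K_n$ has non-empty interior at every level, making the $\varepsilon/2$-density statement non-vacuous. The remaining steps (choice of $n$, the right $\PSL$-action, the triangle inequality) are formal consequences of the McCord presentation $\Y=\Gamma\backslash(\PSL\times G)$ and of the definition of the natural distance $\mathbf{d}$.
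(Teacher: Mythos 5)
Your proof is correct, but it takes a genuinely different route from the paper's. The paper applies Sarnak's theorem only on the base $T^1S$ and handles the transverse Cantor direction dynamically: using Theorem~\ref{thm:principal} it chooses a horocycle segment $H_1\subset \PSL\times\{g\}$ whose projection is $\frac{\varepsilon}{2}$-dense in $\K$, and then a uniform-continuity (shadowing) argument for $h_s$, $s\in[-R,R]$, together with Lemmas~\ref{lemma:K_es_invariante_por_Gamma_y_por_G} and~\ref{lemma:cambio_de_horociclo_denso}, converts $\delta$-density of the pushed closed horocycle in $K$ into $\varepsilon$-density of $g_t(\M)$ in $\K$. You instead handle the transverse direction metrically: go deep enough in the tower so that the fibers of $\pi_n:\Y\to\Gamma_n\backslash\PSL$ have $\mathbf{d}$-diameter less than $\varepsilon/2$ (your fiber description $\{\Gamma(u,gh):h\in\widetilde{\Gamma}_n\}$ uses normality of $\Gamma_n$, i.e.\ McCordness), apply Sarnak on the covering surface $S_n$ to the single closed horocyclic orbit $\pi_n(\M)$, and lift via the right $\PSL$-action and the triangle inequality. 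Your route avoids the shadowing step and the two auxiliary lemmas, uses Theorem~\ref{thm:principal} only to dispose of the non-cuspidal case, and in fact never uses the hypothesis that $K$ contain a ball of radius greater than $\varepsilon$ (in the paper it serves only to find a point of $\pi(\Pi_{\Gamma}(H_1))$ inside $K$); the price is that Sarnak's theorem is invoked on every sufficiently deep covering $S_n$ rather than only on the fixed base $S$, and McCordness enters through the group structure of $G$ rather than through minimality of the foliated horocycle flow. Two points you should make explicit: take $d_n$ to be the quotient distance on $\Gamma_n\backslash\PSL$ induced by the left-invariant $d'$, since this is exactly what produces $v\in\PSL$ with $d'(e,v)<\varepsilon/2$ and $y_n\cdot v=z_n$; and note that the uniform $T$ over $K_n$ comes from the same finite-cover and bump-function argument that the paper records in the Remark following the lemma, so the ``main obstacle'' you anticipate is already resolved by the argument you sketch.
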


Let us first see that the analogue of this lemma is known in the case of a surface, since our result relies on this fact.

\begin{remark}
As a consequence of Sarnak's theorem \cite{Sarnak1981}, the following is true. \emph{Let $S$ be a non-compact hyperbolic surface of finite area, $K$ a compact subset of $T^1S$ and $H$ a closed horocyclic orbit in $T^1S$. Then, for any $\varepsilon>0$, there exists $T\in\R$ such that for all $t<T$ the closed horocyclic orbit $g_t(H)$ is $\varepsilon$-dense in $K$.} However, Sarnak's powerful result does not seem to be essential and there should be an alternative proof of Theorem~\ref{thm:3} using only topological properties of the geodesic and horocycle flows.
\medskip

 In order to see this, let $\nu$ be the $h_s$-invariant probability measure supported on $H$. Namely, $\nu$ is the normalised length measure on $H$. The family of balls $\{B(x,\varepsilon/2)\}_{x\in K}$ covers $K$, so we can consider a finite subfamily $F=\{ B(x_i,\varepsilon/2)\}_{i=1}^n$ which still covers $K$. For each $i$, let $f_i:T^1S\to \R$ be a continuous function with support in $B(x_i,\varepsilon/2)$ such that $\int_{T^1S} f_i(x)\, dx=1$. This integral is taken with respect to the Liouville measure on $T^1S.$

 According to Sarnak's theorem, for all bounded continuous functions $f:T^1S\to \R$, the limit
 $$\lim_{t\to -\infty} \int_{T^1S}f(x)\ d(g_t)_*\nu(x) = \int_{T^1S} f(x)\, dx,$$
so there exists $T$ such that
 $$\int_{T^1S} f_i(x)\, d(g_t)_*\nu(x) >1/2$$
for all $t<T$ and $i=1,\ldots,n$. 
Since the support of the measure $(g_t)_*\nu$ is the closed horocyclic orbit $g_t(H)$, this orbit must intersect all elements of $F$ for $t<T$. Let $x\in K$. Since $F$
covers $K$, $x\in B(x_i,\varepsilon/2)$ for some $i$. For this $i$, $B(x_i,\varepsilon/2)\subset B(x,\varepsilon)$. This implies that for $t<T$, $g_t(H)\cap B(x,\varepsilon)\neq \emptyset$. This means that, for $t<T$, $g_t(H)$ is $\varepsilon$-dense in $K$, as claimed.
\end{remark}

 We will need a couple of technical lemmas in order to prove Lemma~\ref{lemma:epsilon_densidad_en_K}.
 
 First, recall that the McCord solenoid $\Y=\Gamma \backslash (\PSL\times G)$ is obtained as the suspension of a representation $\rho:\Gamma\to G$ which has dense image. This gives rise to a commutative diagram
 \[ \begin{tikzcd}
\PSL\times G \arrow{r}{\Pi_{\Gamma}} \arrow[swap]{d}{\pi_1} & \Y \arrow{d}{\pi} \\%
\PSL \arrow{r}{\pi_{\Gamma}}& T^1S
\end{tikzcd}
\]
 where $\pi_1$ is the canonical projection onto the first factor and $\Pi_{\Gamma}$ and $\pi_{\Gamma}$ are projections to the quotient under the respective left $\Gamma$-actions. Now, recall that the $\Gamma$-action on $\PSL\times G$ is given by
 $$\gamma\cdot (u,g)=(\gamma u, \rho(\gamma) g).$$
Since it commutes with the right action of $G$ on $\PSL\times G$ given by 
 $$(u,g)\cdot g'=(u,gg'),$$ 
we have a natural right action of $G$ on $\Y$. 
 
 \begin{lemma}\label{lemma:K_es_invariante_por_Gamma_y_por_G}
 Let $K$ be a compact subset of $T^1S$. Then $\mathcal{K}=\pi^{-1}(K)\subset \Y$ is invariant by the right action of $G$ on $\Y$.
 \end{lemma}

 \begin{proof} It suffices to prove that 
$$
\widetilde{\K}=\Pi_{\Gamma}^{-1}\big(\pi^{-1}(K)\big)\subset \PSL\times G.
$$
is invariant under both actions, the left action of $\Gamma$
and the right action of $G$. The $\Gamma$-invariance is straightforward, since $\widetilde{\K}$ is the preimage under $\Pi_{\Gamma}$ of $\K$. Notice that an alternative way to write $\mathrm{K}$ is 
 $$\widetilde{\K}=\pi_1^{-1}\big((\pi_{\Gamma})^{-1}(K)\big).$$
Now the $G$-invariance of $\widetilde{\K}$ is also straightforward, since it is the preimage under $\pi_1$ of a set in $\PSL$.
\end{proof}

 \begin{lemma}\label{lemma:cambio_de_horociclo_denso}
  Let $K$ be a compact subset of $T^1S$, and $\mathcal{K}=\pi^{-1}(K)\subset \Y$. Let $g\in G$. If $H_1$ is a subset of $\PSL\times \{g\}\subset \PSL\times G$ and $\gamma\in \Gamma$, let 
  $$H_1'=\{\, (\gamma u,g)\in \PSL\times G\, :\, (u,g)\in H_1 \, \}.$$ 
  If $\Pi_{\Gamma}(H_1)$ is $\varepsilon$-dense in $\mathcal{K}$, so is $\Pi_{\Gamma}(H'_1)$.
 \end{lemma}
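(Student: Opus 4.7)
The plan is to show that $\Pi_{\Gamma}(H_1')$ differs from $\Pi_{\Gamma}(H_1)$ by a single right translation by an element of $G$, and then invoke the isometric nature of this right action together with the $G$-invariance of $\mathcal{K}$ established in Lemma~\ref{lemma:K_es_invariante_por_Gamma_y_por_G}.

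First I would establish the key identity $\Pi_{\Gamma}(\gamma u, g) = \Pi_{\Gamma}(u, \rho(\gamma)^{-1} g)$. This is immediate from the definition of the $\Gamma$-action on $\PSL\times G$: indeed, $\gamma^{-1}\cdot(\gamma u, g) = (u, \rho(\gamma)^{-1}g)$, so the two pairs belong to the same $\Gamma$-orbit. Next, observe that the right action of $G$ on $\PSL\times G$ commutes with the $\Gamma$-action (left translations on both factors commute with right translations on the second factor), so it descends to a well-defined right action of $G$ on $\Y$ given by $\Pi_{\Gamma}(u,g)\cdot g' = \Pi_{\Gamma}(u, gg')$.

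Now comes the decisive observation: since every element of $H_1$ has the same second coordinate $g$, we can choose the single element $g' := g^{-1}\rho(\gamma)^{-1}g \in G$, independent of $u$, so that $g g' = \rho(\gamma)^{-1}g$. Combining with the identity above yields
$$
\Pi_{\Gamma}(\gamma u, g) = \Pi_{\Gamma}(u, \rho(\gamma)^{-1}g) = \Pi_{\Gamma}(u,g)\cdot g'
$$
for every $(u,g)\in H_1$, and therefore $\Pi_{\Gamma}(H_1') = \Pi_{\Gamma}(H_1)\cdot g'$. The crucial point — and the only subtlety — is that this $g'$ does not depend on $u$, which is precisely what allows us to exchange a left translation on the second factor (which does not descend to $\Y$) for a right translation (which does); uniformity here rests on the fact that $H_1$ lies in a single slice $\PSL\times\{g\}$.

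To conclude, note that right translation by $g'$ is an isometry of $(\Y,\mathbf{d})$: this follows directly from the definition of $\mathbf{d}$ together with the right-invariance of $d''$ (implied by its bi-invariance) and the fact that the right action only modifies the $G$-coordinate. Moreover, Lemma~\ref{lemma:K_es_invariante_por_Gamma_y_por_G} guarantees $\mathcal{K}\cdot g' = \mathcal{K}$. Hence, given any $y\in \mathcal{K}$, the point $y\cdot (g')^{-1}$ lies in $\mathcal{K}$, so by the assumed $\varepsilon$-density of $\Pi_{\Gamma}(H_1)$ there exists $z\in\Pi_{\Gamma}(H_1)$ with $\mathbf{d}(y\cdot (g')^{-1}, z) < \varepsilon$, and applying the isometry we obtain $\mathbf{d}(y, z\cdot g') < \varepsilon$ with $z\cdot g' \in \Pi_{\Gamma}(H_1')$. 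I expect no serious obstacle beyond this; the technical care lies in distinguishing the non-descending left action on the second factor from the descending right action, and noticing that the fixed second coordinate of $H_1$ makes the former realizable as the latter through conjugation.
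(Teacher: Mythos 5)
Your proof is correct and follows essentially the same route as the paper: both arguments exploit the fact that $H_1$ lies in a single slice $\PSL\times\{g\}$ to convert the left translation by $\rho(\gamma)$ into the right translation by $g'=g^{-1}\rho(\gamma)^{-1}g$, then use that this right $G$-action descends to an isometry of $(\Y,\mathbf{d})$ and preserves $\mathcal{K}$ by Lemma~\ref{lemma:K_es_invariante_por_Gamma_y_por_G}. The only cosmetic difference is that the paper passes through the auxiliary set $H_1''=\gamma\cdot H_1$ while you apply $\gamma^{-1}$ directly, which is the same computation.
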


 \begin{proof}
 If we define
 \begin{eqnarray*}
 H''_1&=&\{ \, \gamma\cdot (u,g)\in \PSL\times G\, :\, (u,g)\in H_1 \, \}\\
 &=&\{ \, (\gamma u,\rho(\gamma)g)\in \PSL\times G\, :\, (u,g)\in H_1  \, \},
 \end{eqnarray*}
 then clearly $\Pi_{\Gamma}(H_1)=\Pi_{\Gamma}(H''_1)$, which is $\varepsilon$-dense in $\K$. Consider $g'\in G$ such that $\rho(\gamma)gg'=g$. The right action of $G$ in $\Y$ preserves the natural distance $\mathbf{d}$, so $\Pi_{\Gamma}(H''_1)g'$ is $\epsilon$-dense in $\K g'$. By Lemma~\ref{lemma:K_es_invariante_por_Gamma_y_por_G}, $\K g'=\K$. On the other hand, $\Pi_{\Gamma}(H''_1)g'=\Pi_{\Gamma}(H'_1)$, so $\Pi_{\Gamma}(H'_1)$ is $\varepsilon$-dense in $\K$, as claimed.
\end{proof}

We will now prove Lemma~\ref{lemma:epsilon_densidad_en_K}.

\begin{proof}[Proof of Lemma~\ref{lemma:epsilon_densidad_en_K}] 
Let $\mathcal{K}$, $\varepsilon$ and $\M$ be as in the statement of the Lemma. 

Let $g\in G$ and $H_0 \subset \PSL\times \{g\}$ a horocyclic orbit that projects, via $\Pi_{\Gamma}$, onto a horocyclic orbit contained in $\M$. By Theorem~\ref{thm:principal}, there are many horocyclic orbits in $\PSL\times\{g\}$ whose projection under $\Pi_{\Gamma}$ is dense in $\Y$. So we can consider a compact segment $H_1$ of a horocyclic orbit in $\PSL\times \{g\}$ such that $\Pi_{\Gamma}(H_1)$ is $\frac{\varepsilon}{2}$-dense in $K$. Let $R$ be the length of $H_1$.

Since the horocycle flow $h_s$ in the hyperbolic plane $\mathbb{H}$ is continuous (and the isometry group $\PSL$ acts transitively in $T^1\mathbb{H}$), there exists $\delta>0$ such that 
$$u,v\in \PSL,\ d(u,v)<\delta\ \Longrightarrow \ \forall s\in [-R,R],\ d(h_su,h_sv)<\frac{\varepsilon}{2}.$$
 
Recall that since $\M$ is a minimal invariant set for the horocycle flow on $\Y$, it is a minimal set over a closed horocycle, as a consequence of Theorem~\ref{thm:principal}. We know that $\Pi_{\Gamma}(H_0)\subset \M$, so $\pi(\Pi_{\Gamma}(H_0))$ is a closed horocyclic orbit in $T^1S$. There exists $T\in \R$ such that for all $t<T$ the set $g_t(\pi(\Pi_{\Gamma}(H_0)))$ is $\delta$-dense in $K$. Of course
$$g_t(\pi(\Pi_{\Gamma}(H_0)))=\pi(g_t(\Pi_{\Gamma}(H_0)))=\pi(\Pi_{\Gamma}(g_t(H_0))),$$
if we allow for the abuse of notation which consists in calling $g_t$ the geodesic flows in $T^1S$, in $\Y$ and in $\PSL\times G$.

We claim that $g_t(\Pi_{\Gamma}(H_0))$ is $\varepsilon$-dense in $\mathcal{K}$ for $t< T$. This implies that $g_t(\M)$ is $\varepsilon$-dense in $\K$, as desired. Let us prove this claim.

We fix $t<T$. We know that $g_t(\pi(\Pi_{\Gamma}(H_0)))$ is $\delta$-dense in $K$, which means that any point in $K$ is at distance less than $\delta$ from $g_t(\pi(\Pi_{\Gamma}(H_0)))$. This is true of any particular point $u_1\in \pi(\Pi_{\Gamma}(H_1))\cap K$. Such a point exists because $K$ contains a ball of radius greater than $\varepsilon$ and $\pi(\Pi_{\Gamma}(H_1))$ is $\frac{\varepsilon}{2}$-dense in $K$. So for a fixed $u_1$ there exists $v_0\in g_t(\pi(\Pi_{\Gamma}(H_0)))$ such that $d(u_1,v_0)<\delta$.
Using the fact that 
$$(\pi\circ \Pi_{\Gamma})_{|(\PSL\times\{g\})}:\PSL\times \{g\} \to T^1 S$$ 
is a covering and a local isometry, there exist $v\in g_t(H_0)$ and $u$ which is some preimage of $u_1$ under $(\pi\circ \Pi_{\Gamma})_{|(\PSL\times\{g\})}$ such that $d(u,v)<\delta$ in $\PSL\times \{g\}$. Notice that $u$ belongs to the preimage of $\pi(\Pi_{\Gamma}(H_1))$ under $(\pi\circ \Pi_{\Gamma})_{|(\PSL\times\{g\})}$, namely, to a horocycle segment (of length $R$)
$$H_1'=\{\, (\gamma w,g)\in \PSL\times G\, :\, (w,g)\in H_1 \, \},$$ 
for some $\gamma\in \Gamma$.

We have chosen $\delta$ so that $d(h_su,h_sv)<\frac{\varepsilon}{2}$, for all $s\in [-R,R]$. As $s$ varies in $[-R,R]$, $h_su$ covers all the points of $H'_1$ (and more). On the other hand, $h_sv$ is always a point in $g_t(H_0)$. This means that, in $\PSL\times \{g\}$, $g_t(H_0)$ is $\frac{\varepsilon}{2}$-dense in $H'_1$.

The projection $\Pi_{\Gamma}$ is 1-Lipschitz from $\PSL\times G$ to $\Y$. By Lemma~\ref{lemma:cambio_de_horociclo_denso}, $\Pi_{\Gamma}(H_1')$ is $\frac{\varepsilon}{2}$-dense in $\mathcal{K}$. And for any point in this set, there is a point in $\Pi_{\Gamma}(g_t(H_0))$ that is at distance less than $\frac{\varepsilon}{2}$ from it. Therefore 
$$\Pi_{\Gamma}(g_t(H_0))=g_t(\Pi_{\Gamma}(H_0))$$ 
is $\varepsilon$-dense in $\mathcal{K}$, and so is the larger set $g_t(\M)$, as claimed.
\end{proof}

The last ingredient we need to prove Theorem~\ref{thm:convergence_in_Hausdorff_distance} is the following elementary fact about compact metric spaces.

\begin{lemma}\label{lemma:podemos_cambiar_la_distancia_sin_perder_epsilon_densidad}
Let $X$ be a compact metrizable space and $d,d'$ two distances which induce the topology in $X$. For every $\varepsilon>0$ there exists $\varepsilon'>0$ such that if $A\subset X$ is $\varepsilon'$-dense in $(X,d')$, then $A$ is $\varepsilon$-dense in $(X,d)$.
\end{lemma}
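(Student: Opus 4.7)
The plan is to reduce the statement to the classical fact that any continuous map from a compact metric space to a metric space is uniformly continuous. The hypothesis that $d$ and $d'$ induce the same topology on $X$ means precisely that the identity map $\mathrm{id}:(X,d')\to(X,d)$ is a homeomorphism, and in particular continuous. Since $(X,d')$ is compact (the compactness of $X$ is a topological property, independent of which compatible metric we use), the identity is then uniformly continuous as a map between these two metric spaces.

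Unpacking uniform continuity gives exactly what we need. Given $\varepsilon>0$, there exists $\varepsilon'>0$ such that for all $x,y\in X$,
\[
d'(x,y)<\varepsilon' \ \Longrightarrow\ d(x,y)<\varepsilon.
\]
Now suppose $A\subset X$ is $\varepsilon'$-dense in $(X,d')$. For any $x\in X$ we can pick $y\in A$ with $d'(x,y)<\varepsilon'$, and the implication above yields $d(x,y)<\varepsilon$. Hence $A$ is $\varepsilon$-dense in $(X,d)$, as required.

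There is no real obstacle here: the only thing to verify is uniform continuity of the identity, which follows by the standard argument (a continuous function on a compact space is uniformly continuous, proved for instance by an open cover / Lebesgue number argument applied to the cover of $X$ by $d$-balls of radius $\varepsilon/2$, pulled back through the identity into $(X,d')$). The lemma is essentially a restatement of this principle in the language of $\varepsilon$-density.
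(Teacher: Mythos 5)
Your proof is correct. You reduce the lemma to the Heine--Cantor theorem: since both metrics induce the same (compact) topology, the identity $\mathrm{id}:(X,d')\to(X,d)$ is a continuous map from a compact metric space and hence uniformly continuous, and the modulus $\varepsilon'$ of uniform continuity is exactly the quantity the lemma asks for. The paper instead gives a self-contained finite-cover argument: it covers $X$ by $d$-balls $U_i$ of radius $\varepsilon/2$, chooses for each $x$ a $d'$-ball $U'_x$ centred at $x$ and contained in the $d$-ball of radius $\varepsilon/4$ about $x$, extracts a finite subcover $\{U'_j\}$, and takes $\varepsilon'$ to be the smallest $d'$-radius occurring; an $\varepsilon'$-dense set must meet every $U'_j$, hence every $U_i$, hence every $d$-ball of radius $\varepsilon$. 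The two arguments are close in spirit --- the paper's covering construction is essentially an inlined proof of the uniform continuity you invoke, as you yourself observe in your closing remark --- but your version is shorter and makes the underlying principle explicit at the cost of citing a standard theorem, whereas the paper's is elementary and self-contained, needing nothing beyond compactness and the definition of the topology. Either is an acceptable proof of the lemma.
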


\begin{proof}
To see this, consider a finite covering $F=\{U_i\}_{i=1}^n$ by open balls of radius $\varepsilon/2$ in $(X,d)$. For every $x\in X$ there is an open ball $U'_x$, in $(X,d')$, centred at $x$, which is contained in the ball centred at $x$ of radius $\frac{\varepsilon}{4}$ in $(X,d)$. Since $\{U'_x\}_{ x\in X}$ is an open covering of $X$, it has a finite subcovering $F'=\{U'_j\}_{j=1}^m$.

Notice that for every $i \in \{1,\ldots,n\}$ there exists an $j\in\{1,\ldots,m\}$ such that $U'_j\subset U_i$.

Let $\varepsilon'$ be the minimum radius, in the distance $d'$, of the balls in $F'$. If $A\subset X$ is $\varepsilon'$-dense in $(X,d')$, it intersects all elements $U'_j$ of $F'$. Since any $U_i \in F$ contains an element of $F'$, $A\cap U_i\neq \emptyset$. Any ball of radius $\varepsilon$ in $(X,d)$, contains an element of $F$, so $A$ is $\varepsilon$-dense in $(X,d)$.
\end{proof}

Now we will prove Theorem~\ref{thm:convergence_in_Hausdorff_distance}.
\medskip

\begin{proof}
Let $\M\subset \Y$ be a minimal set for the horocycle flow. Let $d$ be any distance which induces the topology on the compact space $\widehat{\Y}$. Let 
$$O=\bigcup_{ \xi \in \widehat{\Y}-\Y} B( \xi,\varepsilon/2).$$
It is an open set, so its complement $O^c$ is a compact subset of $\Y$. Let $K=\pi(O^c)\subset T^1S$ and $\K=\pi^{-1}(K) \supset O^c$. Then Lemma~\ref{lemma:epsilon_densidad_en_K}, together with Lemma~\ref{lemma:podemos_cambiar_la_distancia_sin_perder_epsilon_densidad}, guarantee that there exists $T\in\R$ such that, for all $t<T$, $g_t(\M)$ is $\frac{\varepsilon}{2}$-dense in $\K$ and therefore in $O^c$. This readily implies that it is $\varepsilon$-dense in $\widehat{\Y}=O\cup O^c$.
\end{proof}

In \cite{ADMV2}, the minimality of the foliated horocycle flow in a Riemannian foliated compact manifold was derived from two measure theoretic theorems: Moore's ergodicity theorem (which is also used in the proof of Theorem~\ref{thm:principal}) and a theorem of unique ergodicity by Y. Coudène \cite{Coudene}. Here the dynamical behaviour of the minimal sets for the laminated horocycle flow in a McCord solenoidal surface of finite type is derived from Sarnak's theorem about the distribution of closed horocycles with large period. Both works deal with the interplay between topological and measurable dynamics of foliated or laminated 
%horocycle flows
spaces, an area of growing interest.
%although here we only study the topological aspects of the dynamics. The study of the measurable dynamics of homogeneous and non-homogeneous solenoidal surfaces and the distribution of minimal sets over closed horocycles of large period are open problems.

\bibliography{solenoides}{}
\bibliographystyle{amsplain}

\end{document}